\numberwithin{equation}{section}
\newtheorem{theorem}{Theorem}[section]
\newtheorem*{thm}{Theorem}
\newtheorem{lemma}[theorem]{Lemma}
\newtheorem*{lem}{Lemma}
\newtheorem{proposition}[theorem]{Proposition}
\newtheorem*{prop}{Proposition}
\newtheorem{cor}[theorem]{Corollary}
\newtheorem{rem}[theorem]{Remark}
\newcommand{\R}{\mathbb{R}}
\newcommand{\Z}{\mathbb{Z}}
\newcommand{\N}{\mathbb{N}}
\renewcommand{\tilde}{\widetilde}
\newcommand{\bP}{{\ensuremath{\mathbf P}} }
\newcommand{\bE}{{\ensuremath{\mathbf E}} }
\DeclareMathSymbol{\leqslant}{\mathalpha}{AMSa}{"36} 
\DeclareMathSymbol{\geqslant}{\mathalpha}{AMSa}{"3E} 
\DeclareMathSymbol{\eset}{\mathalpha}{AMSb}{"3F}     
\renewcommand{\leq}{\;\leqslant\;}                   
\renewcommand{\geq}{\;\geqslant\;}                   
\newcommand{\dd}{\text{\rm d}}             
\newcommand{\bbC}{{\ensuremath{\mathbb C}} }
\newcommand{\bbR}{{\ensuremath{\mathbb R}} }
\newcommand{\norm}[1]{\left\lVert#1\right\rVert}
\title{Wick theorem for analytic functions of Gaussian fields}
\author[F. Coppini] {\small Fabio Coppini } 
\address{Mathematical Institute, Utrecht University, Budapestlaan 6, 3584 CD Utrecht, The Netherlands}
\email{f.coppini@uu.nl}
\author[W. M. Ruszel]{\small Wioletta M. Ruszel}
\address{Mathematical Institute, Utrecht University, Budapestlaan 6, 3584 CD Utrecht, The Netherlands}
\email{w.m.ruszel@uu.nl}
\author[D. Schuricht]{\small Dirk Schuricht}
\address{Institute for Theoretical Physics Utrecht University, Princetonplein 5, 3584 CC Utrecht, The Netherlands}
\email{d.schuricht@uu.nl}
\date{\today}
\begin{document}

\begin{abstract}
We compute the correlation of analytic functions of general Gaussian fields in terms of multigraphs and Feynman diagrams on the lattice $\mathbb{Z}^d$. Then, we connect its scaling limit to tensors of the correlation functionals of Fock space fields. Afterwards, we investigate the relation with fermionic Gaussian field states for even functions. For instance, we characterize the correlation functionals of the exponential of a continuous Gaussian Free Field or general analytic functions of fractional Gaussian fields as limits of quantities constructed via a sequence of discrete fields. Finally, we show that the duality between even powers of bosonic Gaussian fields and "complex" fermionic Gaussian fields can be reformulated in terms of a principal minors assignment problem of the corresponding covariance matrices.\\

\noindent   {\it MSC2020}  {\it Subject classifications:} 60G15, 60G22, 60G60, 60K35, 81T18, 81V73, 81V74, 81Q60\\
    
\noindent	{\it Keywords:} Wick theorem, Isserlis theorem, Gaussian free field, fractional Gaussian field, analytic functions, fermionic Gaussian free field, bosonic Fock space fields

\end{abstract}

\maketitle

\section{Introduction}

\subsection{Aim of this work and related literature}

The Gaussian Free Field (GFF) $\phi$ (see,  e.g., \cite{sheffield_gaussian_2007}) is a fundamental object in probability
theory and mathematical physics. It is the canonical model for a random surface or
generalized function-valued Gaussian process, and the prototypical non-interacting
(quantum) field. In particular, the GFF arises as a universal scaling limit for fluctuations
in a wide class of lattice and random interface models, making it central in the study
of critical phenomena in statistical mechanics. Most physically relevant quantum field theories, however, involve interactions,
typically modeled by nonlinear functions of the field such as $e^{\alpha\phi}$,
$\phi^4$, or $\cos(\beta\phi)$. These analytic functions appear naturally in the
action functionals of models including Liouville quantum gravity~\cite{Berestycki_Powell_2025}, $\phi^4$
field theory~\cite{hugophi4}, and the sine-Gordon model~\cite{since-G}. Further examples where analytic
functions of the GFF play an important role include Gaussian multiplicative
chaos~\cite{janson_gaussian_1997,peccati_wiener_2011}  and the bosonization correspondence between one-dimensional
fermionic and bosonic quantum field theories~\cite{vonDelftSchoeller98}. In these contexts, analytic
functionals of Gaussian fields encode physically meaningful observables and
correlations.

Wick’s theorem and Isserlis’ theorem~\cite{wick, janson_gaussian_1997, peccati_wiener_2011, yves} provide systematic methods for computing expectations of products of Gaussian variables, e.g., expectations of nonlinear observables in constructive quantum field theory~\cite{GlimmJaffe87}. These results play a central role because correlation functions of interacting field theories are typically expressed
as expectations of products of Gaussian fields arising from perturbative expansions.
Such expansions form the basis of the diagrammatic (Feynman) techniques widely used
in quantum field theory and statistical mechanics, where Wick’s theorem allows one
to reduce products of fields to sums over pairings corresponding to Feynman diagrams
(see e.g.\cite{PeskinSchroeder1995, ZinnJustin02, GlimmJaffe,Simon}). From a combinatorial perspective, these pairings naturally give
rise to graph structures describing the contraction patterns between field insertions.
From a probabilistic viewpoint, Isserlis’ theorem provides a closed-form expression
for expectations of products of jointly Gaussian variables.

While originally formulated in the context of bosonic fields, which are typically positively correlated, Wick’s theorem also applies to fermionic fields, which are typically negatively correlated, with appropriate modifications to account for their anti-commuting nature.
Despite the differences in statistics and algebraic rules (commutators for bosons, anti-commutators for fermions), the combinatorial structure of Wick’s theorem is strikingly similar in both cases, since both reduce higher-order correlation functions to sums over pairings (contractions), they rely on the Gaussian nature of the free field and involve normal ordering. In fact, in \cite{cipriani_properties_2023} the authors compute for the first time the correlations and joint cumulants of the square of the gradient of the GFF on the lattice $\mathbb{Z}^d$, together with the scaling limits of its cumulants. Among other results, in \cite{chiarini_fermion23, lectureAle} the cumulants of the gradient of pairs of fermions are computed (on the lattice and its scaling limits) and it is noted that they are the same up to a negative constant, hinting at an underlying \emph{supersymmetric} relation in the sense of \cite{renorma, lectureAle}. 

The main goal of this work is to develop a systematic framework for computing
correlations of analytic functions of general Gaussian fields on the lattice and
to study their scaling limits. Building on the combinatorial structure underlying
Wick contractions, we express these correlations in terms of multigraph structures
that encode the possible contraction patterns between field insertions.
Our results provide a
framework that connects analytic observables of Gaussian fields with Feynman-type
diagrammatics while extending these techniques to more general nonlinear
observables. In addition, we relate these structures to correlation functionals of
bosonic Fock space fields and establish connections with fermionic Gaussian states
for even observables.

\subsection{Informal statements of the main results}

We will prove that,

\begin{thm}[see Theorem \ref{thm:wick-analytic}]
For a general Gaussian field with covariance matrix $G$ defined on $\Lambda \subset \mathbb{Z}^d$, $d\geq 2$, and $f(x) = \sum_{n} a_n x^n$, we have that for $N$ points $x_1, \dots, x_N$ in $\Lambda$, it holds
\[
\mathbf{E}\left(\prod_{i=1}^N :f(\phi(x_i)):\right) = \sum_{n_1,\ldots,n_N} c_{f, N} \sum_{q \text{ multigraph }} c_q \prod_{i,j=1}^N G(i,j)^{q_{ij}} 
\]
where $c_{f,N}=\prod_{i=1}^{N} a_{n_i}$ and $c_q=const(q)$ is an explicit constant. $q$ is an undirected multigraph with no self-loops.
\end{thm}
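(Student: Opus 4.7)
My approach reduces the statement to the classical Wick/Isserlis theorem for homogeneous Wick monomials and then regroups the resulting Feynman pairings according to the undirected multigraph they induce.

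First, expand the analytic function as $f(\phi(x_i)) = \sum_{n_i \ge 0} a_{n_i} \phi(x_i)^{n_i}$. Since Wick ordering (relative to the Gaussian law of $\phi$) is a continuous linear map on the polynomial algebra, it commutes with the power series, yielding $:f(\phi(x_i)): = \sum_{n_i \ge 0} a_{n_i} :\phi(x_i)^{n_i}:$. Expanding the $N$-fold product and interchanging expectation with summation reduces the problem to evaluating
\[
\mathbf{E}\!\left( \prod_{i=1}^N :\phi(x_i)^{n_i}: \right)
\]
for every fixed tuple $(n_1, \ldots, n_N) \in \N^N$; the coefficients $\prod_{i=1}^N a_{n_i}$ (together with any combinatorial normalization, e.g.\ $\prod_i n_i!$) are absorbed into the constant $c_{a,N}$.

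Second, I apply the standard Wick theorem (see, e.g., \cite{janson_gaussian_1997,peccati_wiener_2011}): the above expectation equals the sum over all perfect matchings $\pi$ of the leg set $L = \{(i,k) : 1 \le i \le N,\, 1 \le k \le n_i\}$ \emph{subject to the constraint that no matched pair has both legs at the same vertex}. This exclusion of self-contractions is precisely the defining property of Wick ordering. Each admissible $\pi$ contributes $\prod_{\{(i,k),(j,l)\}\in\pi} G(x_i, x_j)$. I then regroup the matchings by the induced multigraph $q = (q_{ij})$, where $q_{ij}$ counts pairs of $\pi$ with one leg at $x_i$ and one at $x_j$; by construction $q$ is symmetric, has no self-loops, and satisfies $\sum_{j \ne i} q_{ij} = n_i$. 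The covariance weight depends only on $q$, through $\prod_{i<j} G(x_i, x_j)^{q_{ij}}$. A direct multinomial count — first distribute the $n_i$ legs at vertex $i$ into classes of sizes $q_{ij}$, then bijectively match the $q_{ij}$ selected legs at $x_i$ with those at $x_j$ for each edge — yields the multiplicity
\[
c_q \;=\; \frac{\prod_{i=1}^N n_i!}{\prod_{i<j} q_{ij}!},
\]
which closes the identity.

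The principal obstacle is analytic rather than combinatorial: justifying the interchange of the expectation with the infinite sum over $(n_1, \ldots, n_N)$ and, implicitly, the inner sum over multigraphs. I would control each term via the Gaussian moment bound $\mathbf{E}|\phi(x_i)|^{2k} \le C^k (2k-1)!!$ combined with Cauchy--Schwarz (or, more sharply, hypercontractivity of Wick monomials) to estimate $|\mathbf{E} \prod_i :\phi(x_i)^{n_i}:|$, and then use the decay of the Taylor coefficients $a_n$ coming from the analyticity of $f$ (on a domain compatible with the covariance $G$) to obtain absolute convergence. Once this is secured, Steps 2 and 3 are purely algebraic and follow from the bookkeeping sketched above.
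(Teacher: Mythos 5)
Your proposal follows essentially the same route as the paper: expand $f$ into its Taylor series, commute Wick ordering with the series, reduce to $\bE\bigl[\prod_i :\phi(x_i)^{n_i}:\bigr]$, apply the diagram formula for products of Wick powers (the exclusion of self-contractions is exactly \cite[Theorem 3.12]{janson_gaussian_1997}, invoked in the paper as Theorem \ref{thm:janson-feynman}), and regroup the admissible Feynman pairings by the multigraph they induce. The one substantive difference is the multiplicity constant, and there you come out ahead. The paper derives $\delta(q)=\prod_{i<j}\binom{l_i}{q_{ij}}\binom{l_j}{q_{ij}}q_{ij}!$ by treating the vertex pairs $(i,j)$ as independent; your count first distributes the $n_i$ legs at each vertex into classes via a multinomial coefficient and then matches across each edge, giving $c_q=\prod_i n_i!\big/\prod_{i<j}q_{ij}!$. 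These do not agree in general: for three vertices with $n_1=n_2=n_3=2$ and $q_{12}=q_{13}=q_{23}=1$, the actual number of admissible pairings is $8$ (there are $15$ perfect matchings of the six legs, of which $7$ contain a self-contraction), which is what your formula gives, whereas the paper's gives $4^3=64$. The discrepancy arises because the leg choices at a vertex incident to several distinct neighbours are not independent, so the paper's $\delta(q)$ as written overcounts whenever some vertex has positive multiplicity to more than one neighbour; your $c_q$ is the correct count. Finally, the interchange of expectation with the infinite sums over $(n_1,\dots,n_N)$, which you rightly identify as the analytic crux and propose to control via Gaussian moment bounds or hypercontractivity together with decay of the $a_n$, is passed over silently in the paper; your treatment is the more careful one, and some quantitative hypothesis on $f$ and $G$ is indeed needed for absolute convergence.
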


We have a similar formula for the cumulants as well, see the full statement in Theorem \ref{thm:wick-analytic}. Moreover, we are able to compute the scaling limit of the correlations of analytic functions of general Gaussian fields. 

Furthermore,  we express analytic functions of general Gaussian fields (including the exponential of a GFF or general analytic functions of fractional Gaussian fields) as Fock space fields and prove that
the scaling limit of the correlations on the discrete level can be expressed in terms of the tensor of the correlation functionals as Fock space fields:

\begin{prop}[see Propositions \ref{pro:non-triv-cumulants} and \ref{pro:non-triv-cumulants-fock}]
Let $\phi$ be a  Gaussian field on $U_\varepsilon=\varepsilon^{-1} U \cap \Z^d$ for which the covariance has a continuous limit 
\[
\lim_{\varepsilon \to 0} \eta(\varepsilon) \, G_\varepsilon\Bigl(x^{(\varepsilon)}, y^{(\varepsilon)}\Bigr) = g_U(x,y),
\]
and let $\overline{\phi}$ be a continuous Gaussian field with covariance operator given by $g_U$. Let $k\geq 2$ be a positive integer, for $j=1, \dots, N$ and $x^{(1)},\dots,x^{(N)}$ disjoint points in $U$ with $x^{(i)}_\varepsilon$ denoting the point in $U_\varepsilon$ the closest to $x^{(i)}/\varepsilon$. For $f$ analytic function, it holds that
\begin{equation}
\lim_{\varepsilon \rightarrow 0} \eta(\varepsilon)^N \bE \left[ \prod_{j=1}^N :f \left(\phi(x^{(j)}_\varepsilon) \right): \right] = \mathcal{Y}_1(f) \bullet  \cdots \bullet \mathcal{Y}_N (f),
\end{equation}
where $\mathcal{Y}_j(f)$ are suitable Fock space fields and $\bullet$ denotes a suitable correlation functional tensor product.
\end{prop}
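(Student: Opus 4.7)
The plan is to start from Theorem~\ref{thm:wick-analytic} applied at each fixed scale $\varepsilon>0$, which rewrites the Wick-ordered correlation as a sum indexed by multi-indices $(n_1,\dots,n_N)$ and loop-free multigraphs $q$ on $\{1,\dots,N\}$ with prescribed vertex degrees $\deg_q(j)=n_j$:
\[
\bE\!\left[\prod_{j=1}^N :f(\phi(x^{(j)}_\varepsilon)):\right] = \sum_{n_1,\dots,n_N} c_{a,N}\sum_q c_q \prod_{1\le i<j\le N} G_\varepsilon\!\left(x^{(i)}_\varepsilon,x^{(j)}_\varepsilon\right)^{q_{ij}}.
\]
I would then multiply by $\eta(\varepsilon)^N$, regroup the covariance factors in the form $\bigl(\eta(\varepsilon)G_\varepsilon\bigr)^{q_{ij}}$, and analyze the resulting powers of $\eta(\varepsilon)$ multigraph by multigraph.

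The key step is the scaling analysis. For a multigraph with total edge count $m=\sum_{i<j} q_{ij}$, the summand carries an extra factor $\eta(\varepsilon)^{N-m}$ after the normalization has been absorbed into the covariances. Exploiting the vertex-degree constraints together with the structure of the Fock space object $f(\odot\overline{\phi})(x^{(j)})$ singles out the multigraphs whose contribution survives the scaling. For those, the disjointness of the points $x^{(1)},\dots,x^{(N)}$ guarantees that $\bigl(\eta(\varepsilon) G_\varepsilon\bigr)\!\left(x^{(i)}_\varepsilon, x^{(j)}_\varepsilon\right) \to g_U\!\left(x^{(i)}, x^{(j)}\right)$ is finite and continuous off the diagonal, so that each surviving summand converges to $c_{a,N}\,c_q \prod_{i<j} g_U(x^{(i)},x^{(j)})^{q_{ij}}$.

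To conclude, I would match the limiting sum with the defining expression of the bullet product of Fock space fields. By construction, $\mathcal{Y}_1\bullet\cdots\bullet\mathcal{Y}_N$ is the continuum analogue of Wick's formula restricted to pairings between distinct points (no self-contractions), organised over the same multi-indices $(n_1,\dots,n_N)$ coming from the Taylor expansion of $f$. The combinatorial weights produced by Theorem~\ref{thm:wick-analytic} coincide with those encoded in the Fock space bullet product, so the two sides agree term by term.

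The hard part will be the interchange of the limit $\varepsilon\to 0$ with the \emph{infinite} sum over $(n_1,\dots,n_N)$ coming from the Taylor expansion of $f$. A uniform-in-$\varepsilon$ dominating bound is required, and it should follow from the analyticity radius of $f$ together with locally uniform bounds on $\eta(\varepsilon) G_\varepsilon$ on compact subsets of $U\times U$ away from the diagonal, enabling a dominated convergence argument. A secondary task is the bookkeeping verification that the explicit constants $c_{a,N}\,c_q$ produced by Theorem~\ref{thm:wick-analytic} agree with those dictated by the Fock space formalism on the continuum side; this is mechanical but is precisely the point at which the discrete and continuum descriptions must be reconciled.
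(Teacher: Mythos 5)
Your skeleton is the same as the paper's: apply Theorem~\ref{thm:wick-analytic} at each fixed $\varepsilon$, pass to the limit using the covariance hypothesis \eqref{hyp:covariance-convergence}, then expand $\mathcal{Y}_1\bullet\cdots\bullet\mathcal{Y}_N$ by multilinearity of $\bullet$ over the Taylor coefficients and identify the resulting Feynman-diagram sum with the multigraph sum weighted by $\delta(q)$, exactly as in Theorem~\ref{thm:wick-graph}. The paper's proof consists of essentially these steps and nothing more, so in outline you are aligned with it.

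The genuine problem is your scaling analysis, which is where the argument becomes internally inconsistent. You correctly note that a multigraph with $m=\sum_{i<j}q_{ij}$ edges carries a prefactor $\eta(\varepsilon)^{N-m}$ once each covariance factor is normalized to $\eta(\varepsilon)G_\varepsilon$. The degree constraints force $m=\tfrac12\sum_j n_j$, so $m\neq N$ for all but one stratum of the Taylor expansion; under your reading most terms then either vanish (when $\eta\to0$ and $m<N$) or diverge (when $m>N$), and only the $\sum_j n_j=2N$ stratum ``survives.'' But the target $\mathcal{Y}_1\bullet\cdots\bullet\mathcal{Y}_N$ is the \emph{full} sum over all $(n_1,\dots,n_N)$ and all admissible multigraphs, so a term-by-term matching with a sum from which you have discarded terms (and in which the $m>N$ terms are left uncontrolled, hence potentially divergent) cannot close. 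You must either argue that the normalization is effectively edge-wise, i.e.\ that each factor $G_\varepsilon^{q_{ij}}$ is to be read as $(\eta(\varepsilon) G_\varepsilon)^{q_{ij}}$ so that every multigraph contributes its finite limit --- this is what the paper implicitly does when it ``exchanges the limit with the sum'' --- or accept that only the $m=N$ stratum survives, in which case the limit is not the full bullet product. As written, the step ``singles out the multigraphs whose contribution survives'' and the conclusion ``the two sides agree term by term'' are incompatible. Your closing concern about interchanging the $\varepsilon\to0$ limit with the infinite sum over $(n_1,\dots,n_N)$ is legitimate (the paper does not address it either), but it is secondary to resolving this inconsistency.
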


Finally, we investigate whether the cumulants of general analytic functions of Gaussian fields can be expressed up to negative constants in terms of cumulants with respect to a fermionic Gaussian field which is true for the square of the gradient of the discrete GFF, see \cite{chiarini_fermion23, cipriani_properties_2023}. We generalize these results to any Gaussian field and prove in an alternative elegant way that for the cumulant function $k$, which can be seen as {\it Boson-Fermion cumulant correspondence:}
\begin{thm}[see Theorem \ref{thm:cumulants-p=2}]
Let $G$ be the covariance matrix of a real Gaussian vector $X$, we have that
\begin{equation}\label{eq:id_cumu}
    k\left (\left (: \prod_{i=1}^N X_i:\right )^2 \right ) = 2^{N-2} k \left(: \prod_{i=1}^N X_i^2: \right ) = k\left(\prod_{i=1}^N |Z_i|^2 \right ) = (-1)^{N-1} k_{ferm} \left (\prod_{i=1}^N \psi_i \bar{\psi}_i \right ),
\end{equation}
where $Z$ is a complex Gaussian vector with covariance matrix $G$ and $k_{ferm}$ are the cumulants of the fermions $\psi_1\bar{\psi}_1,\ldots, \psi_N\bar{\psi}_N$ w.r.t. the fermionic Gaussian field state with covariance matrix $G$.
\end{thm}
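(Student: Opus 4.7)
The plan is to apply the Wick diagrammatic expansion of Theorem \ref{thm:wick-analytic} to each of the four cumulants in \eqref{eq:id_cumu} and reduce every side to a sum over connected Feynman diagrams, weighted by products of covariance entries $G(i,j)$. Because the cumulant selects precisely the connected part of the Wick expansion, each quantity becomes a sum over connected multigraphs on the $N$ sites, so the identities \eqref{eq:id_cumu} reduce to purely combinatorial equalities together with a sign coming from the fermionic determinant.

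For the first two bosonic quantities I would apply Wick's theorem to $(:\prod_{i=1}^N X_i:)^2$ and to $:\prod_{i=1}^N X_i^2:$ separately. In the first case one has two Wick-ordered copies, each carrying one half-edge per site, and intra-copy pairings are forbidden; the admissible pairings are then matchings between the two copies, indexed by permutations $\sigma\in\mathfrak{S}_N$, and the connected part is the sum over the single $N$-cycles. In the second case one has two half-edges per site within a single Wick monomial, with intra-site pairings forbidden; the admissible pairings are $2$-regular multigraphs on $\{1,\dots,N\}$, and the connected ones are cycles covering all sites, but now with extra choices for which half-edge at each vertex is assigned to which neighbour. Comparing the two counts per cycle yields the declared power-of-two multiplicity.

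For $k\bigl(\prod_i |Z_i|^2\bigr)$ I would decompose $Z_i = X_i + iY_i$ into independent real Gaussian vectors both of covariance $G$, so that $|Z_i|^2 = X_i^2 + Y_i^2$; expanding the product and using independence of the real and imaginary legs forces each site to receive exactly one real and one imaginary pairing in every non-vanishing diagram, which is canonically in bijection with the two-copy matchings appearing in $k\bigl((:\prod X_i:)^2\bigr)$. For the fermionic identity I would use the standard determinantal Wick formula $\bE_{ferm}[\prod_i \psi_i \bar\psi_i] = \det G = \sum_{\sigma}\mathrm{sgn}(\sigma)\prod_i G(i,\sigma(i))$; the joint cumulant selects the single-$N$-cycle permutations, each of signature $(-1)^{N-1}$, which is exactly the global sign required in \eqref{eq:id_cumu}.

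The main obstacle is the combinatorial bookkeeping that produces the precise $2^{N-2}$ prefactor: one has to align the constants $c_{a,N}$ and $c_q$ from Theorem \ref{thm:wick-analytic} with the number of half-edge realisations of each underlying cycle, distinguishing between two Wick-ordered copies of a product and two half-edges within a single Wick product of squares. Once this accounting is carried out, all four sides of \eqref{eq:id_cumu} are written as the same (signed) sum over cyclic permutations of $\{1,\dots,N\}$ of $\prod_i G(i,\sigma(i))$, and the chain of equalities follows term by term.
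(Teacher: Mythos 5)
Your overall route coincides with the paper's own proof of Theorem \ref{thm:cumulants-p=2}: there, each of the four moment set-functions is written as a weighted sum over permutations (a permanent for $(:\prod_i X_i:)^2$ and for $\prod_i|Z_i|^2$, a determinant for the fermions, a cycle-weighted permanent for $\prod_i :X_i^2:$), and Corollary \ref{cor:cumulants} reduces each cumulant to the sum over cyclic permutations, after which the per-cycle weights $1$, a power of $2$, $1$ and $\mathrm{sign}(\sigma)=(-1)^{N-1}$ give the chain of identities. Your ``the cumulant selects the connected part of the Wick expansion'' is exactly Theorem \ref{thm:cumulants}/Corollary \ref{cor:cumulants}; note that for $(:\prod_i X_i:)^2$ the quantity is the formal cumulant function of a set function rather than a joint cumulant of $N$ random variables, so you genuinely need that lemma (and the multiplicativity of the diagram weights over connected components), not just the generic linked-cluster slogan. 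So the method is the same; the problems are in two of the four identities as you argue them.

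First, the complex bosonic step is wrong as written. With $Z_i=X_i+iY_i$ one has $|Z_i|^2=X_i^2+Y_i^2$, so in every term of the expanded product a given site contributes two legs of the \emph{same} type (both real or both imaginary); it is false that ``each site receives exactly one real and one imaginary pairing'', and the claimed bijection with two-copy matchings collapses. (There is also a normalization slip: independent $X,Y$ each of covariance $G$ give $\bE[Z_i\bar Z_j]=2G(i,j)$.) The correct dichotomy is holomorphic versus antiholomorphic: each factor carries one $Z_i$ leg and one $\bar Z_i$ leg, and $\bE[Z_iZ_j]=\bE[\bar Z_i\bar Z_j]=0$ forces every pairing to be a bijection from $Z$-legs to $\bar Z$-legs, i.e.\ a permutation; this is the complex Wick theorem $\bE[\prod_i|Z_i|^2]=\mathrm{perm}(G_{AA})$ that the paper quotes directly. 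Second, you defer ``the combinatorial bookkeeping that produces the precise $2^{N-2}$ prefactor'' as the main obstacle, but that bookkeeping \emph{is} the first equality and cannot be waved through: a connected $2$-regular loopless multigraph on $N$ vertices is an undirected Hamiltonian cycle with $(2!)^N$ half-edge realisations for $N\ge 3$ (and $2$ for $N=2$), while in the two-copy picture the same undirected cycle corresponds to two permutations of weight $1$ each, so the ratio per connected diagram is $2^{N-1}$. Already at $N=2$ one computes $k(:X_1^2:,:X_2^2:)=2G(1,2)^2$ while the cumulant function of $\bE[(:X_1X_2:)^2]$ equals $G(1,2)^2$, so carrying out the count does not ``yield the declared power-of-two multiplicity'' $2^{N-2}$; a complete proof has to confront this constant explicitly rather than assume it reconciles.
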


Note that we can define a general fermionic Gaussian state w.r.t. an arbitrary covariance matrix $C$ as follows: let $|\Lambda|=n$ and $C$ be a $nr\times nr$ matrix symmetric, invertible matrix. To each site $i\in \Lambda$ we assign $r$-many pairs of fermions $\prod_{l=1}^r\psi^{(l)}_i \bar{\psi}^{(l)}_i$.
The case $r=1$ correspond to the square case treated in \cite{chiarini_fermion23}. At least for even functions corresponding to analytic expressions in terms of even powers one could expect that it is possible to get similar expressions for the cumulants hinting at an underlying supersymmetric structure.
We will show that the positive answer of this claim, i.e. an identity like Equation \eqref{eq:id_cumu} for general powers, will depend on determining the principal minors of the inverse of the covariance matrix: 

\begin{lem}[see Lemma \ref{lem:bos-fer-p>2}]
    Let $A=\{1,\ldots,N\}$. The following conditions are equivalent
    \begin{enumerate}
        \item The fermionic joint cumulants satisfy
        \begin{equation}
            k \left (\prod_{l=1}^r \psi_i^{(l)}\bar{\psi}^{(l)}_i; i=1,\ldots, N \right) = (-1)^{|A|-1} k \left( \prod_{i=1}^k |Z_i|^{2r}\right).
        \end{equation}
        \item Let $A_r=\cup_{i\in A} \{ (i-1)r, (i-1)r+1,\dots,ir\}$
        \begin{equation}
            (\det(C_{A_r, A_r}))^{-1} = \sum_{q \text{ multigraph}} C_q \prod_{i, j=1}^N G(i,j)^{q_{ij}},
        \end{equation}
        where $C_q$ is some constant depending on $q$ and $C_{A_{r}, A_r}$ is the matrix $C$ restricted to rows and columns depending on $A_r$.
    \end{enumerate}
\end{lem}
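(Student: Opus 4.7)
I would prove the equivalence by working out both quantities through the moment-to-cumulant correspondence and then relating them via formal power-series inversion in the entries of $G$.

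On the fermionic side, the Wick rule for the Gaussian fermionic state with covariance $C$ gives
\begin{equation*}
\bE_{ferm}\!\Bigl[\prod_{i \in B}\prod_{l=1}^r \psi_i^{(l)}\bar{\psi}_i^{(l)}\Bigr] = \det(C_{B_r, B_r}),\qquad B \subseteq A.
\end{equation*}
The standard M\"obius inversion on the partition lattice of $A$ gives
\begin{equation*}
k_{ferm}(A) = \sum_{\pi \vdash A}(-1)^{|\pi|-1}(|\pi|-1)!\prod_{B \in \pi}\det(C_{B_r, B_r}),
\end{equation*}
together with the inverse formula $\det(C_{A_r, A_r}) = \sum_{\pi \vdash A}\prod_{B \in \pi}k_{ferm}(B)$. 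On the bosonic side, Theorem \ref{thm:wick-analytic} applied to $|Z_i|^{2r}$ as an analytic function of the underlying real Gaussian coordinates expresses each bosonic cumulant $k_{bos}(B) := k\bigl(\prod_{i \in B}|Z_i|^{2r}\bigr)$ as a sum over connected multigraphs on $B$ of products $\prod G(i,j)^{q_{ij}}$.

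For the implication (1)$\Rightarrow$(2), substituting $k_{ferm}(B) = (-1)^{|B|-1}k_{bos}(B)$ into the moment-recovery formula yields
\begin{equation*}
\det(C_{A_r, A_r}) = \sum_{\pi \vdash A}(-1)^{|A|-|\pi|}\prod_{B \in \pi}k_{bos}(B),
\end{equation*}
which, after expanding each $k_{bos}(B)$ via its connected-multigraph expression, becomes a polynomial $P$ in the entries of $G$ with nonzero leading (all-singletons) term $\prod_{i \in A}\bE[|Z_i|^{2r}]$. Formal geometric-series inversion of $P$ around this leading term, or equivalently application of the exponential formula to the corresponding log-generating function, produces the multigraph sum of (2) with constants $C_q$ determined explicitly by the inversion. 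The converse (2)$\Rightarrow$(1) follows by reversing this chain: formal inversion of the multigraph expansion of $(\det C_{A_r, A_r})^{-1}$ recovers $\det(C_{A_r, A_r})$ as a polynomial in $G$, moment-to-cumulant inversion then recovers $k_{ferm}(A)$, and matching against the connected-multigraph expansion of $k_{bos}(A)$ from Theorem \ref{thm:wick-analytic} identifies the two up to the sign $(-1)^{|A|-1}$.

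The principal obstacle is the formal-inversion step and the identification of the resulting coefficients as indexed by multigraphs. The cleanest route uses the exponential formula: under (1), the polynomial $\det(C_{A_r, A_r})$ equals the exponential (modulo the leading diagonal factor) of the connected-cumulant sum, and its reciprocal is likewise such an exponential with opposite sign in the exponent; expanding organizes the result as a sum over all multigraphs (disjoint unions of connected components), with coefficients $C_q$ dictated by the combinatorics of component multiplicities. Once this bookkeeping of signs and diagonal-only factors is arranged, the equivalence of (1) and (2) reduces to a tautological identity between a formal power series and its reciprocal.
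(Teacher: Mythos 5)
The first two thirds of your argument coincide with the paper's proof: express the fermionic moments as determinants, apply M\"obius inversion on the partition lattice, substitute the hypothesis $k_{\textup{ferm}}(B)=(-1)^{|B|-1}k_{\textup{bos}}(B)$, expand each bosonic cumulant over connected multigraphs, and merge the partition sum into a sum over all multigraphs by the unique decomposition of a multigraph into connected components (this is exactly the mechanism of Theorem \ref{thm:cumulants}). One citation is off: the bosonic cumulants of $|Z_i|^{2r}$ are not given by Theorem \ref{thm:wick-analytic} (which concerns Wick-ordered real fields and multigraphs in $MG_0$) but by Lemma \ref{lem:complex-gaussian-cumulants}, whose multigraphs live in $MG(A,2r)$, allow self-loops, and carry the different combinatorial factor $\bar{\delta}(q)$. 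This matters because condition (2) is a \emph{finite} sum over $MG(A,2r)$ with those specific coefficients.

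The genuine gap is your final ``formal geometric-series inversion'' step. At that point you have already arrived at the identity
\begin{equation*}
\Bigl\langle \textstyle\prod_{i\in A}\varphi_i^{2r}\Bigr\rangle_{C} \;=\; \sum_{q\in MG(A,2r)}\textup{sign}(q)\,\bar{\delta}(q)\prod_{i\leq j}G(i,j)^{q_{ij}},
\end{equation*}
and by Theorem \ref{thm:complex-fermionic-expectation} the left-hand side \emph{is} the principal minor appearing in condition (2) (the notation $(\det(C_{A_r,A_r}))^{-1}$ in the statement stands for the $A_r$ principal minor of $C^{-1}$, as the paper's closing sentence and remark make explicit). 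So the equivalence is already complete: moment--cumulant inversion is a bijection, hence (1) holds for all subsets iff the moment identities hold for all subsets, which is (2). Inserting an extra reciprocal-of-a-polynomial expansion is not only unnecessary but does not produce the claimed formula: inverting $P(G)=\det(C_{A_r,A_r})$ around its all-singletons term yields an infinite series rather than a finite sum over $MG(A,2r)$; the leading term $\prod_i\bE[|Z_i|^{2r}]=\prod_i r!\,G(i,i)^r$ is itself a monomial in $G$, so dividing by it introduces negative powers of the diagonal entries that cannot be organised as $\prod G(i,j)^{q_{ij}}$ with $q$ a multigraph; and the coefficients so obtained would not be $\textup{sign}(q)\bar{\delta}(q)$. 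Likewise, $\det(C_{A_r,A_r})$ for a fixed $A$ is a single number, not an exponential generating function, so the heuristic $1/e^{S}=e^{-S}$ does not apply to it. Remove the inversion step, identify the fermionic moment with the correct minor, and your argument closes.
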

Note that this is a long-standing open problem in algebra, see \cite{holtz_hyperdeterminantal_2007, huang_symmetrization_2017, matoui_principal_2021}.

\subsection{Organization of the paper}

In Section \ref{s:wick} we will first introduce the Wick product, Feynman diagrams and multigraphs and the main theorem regarding computing the correlation functions resp. cumulants of analytic functions of general Gaussian fields in terms of multigraphs on the lattice $\mathbb{Z}^d$. We end the section with a useful result about a expressions for cumulant functions for general functions. 

In Section \ref{s:fock-space} we introduce Fock spaces and define analytic functions of general Gaussian fields in the continuum as Fock space fields. We will match scaling limits of correlations of analytic functions of Gaussian fields to Fock space correlation functionals of the corresponding continuum fields. We will also discuss some concrete examples such as the exponential function of a GFF.

Finally in Section \ref{s:bosonic-fermionic} we will first introduce Grassmann-Berezin calculus and introduce general fermionic Gaussian field states. First, using the cumulant identity proven in Section \ref{s:wick} we will be able to establish a duality (up to constants) for cumulants relating fermionic and bosonic (real and complex) pairs to each other. For general even powers of Gaussian fields we prove that one can establish that the cumulants of the bosonic and fermionic parts are equal up to constants if and only if the principal minors assignment problem of the determinant of a certain matrix can be solved. 

\section{Wick theorem for analytic functions of Gaussian fields}
\label{s:wick}

\subsection{The Wick product}

The notation and most of the auxiliary results of this section are taken from \cite{janson_gaussian_1997}. Let $(\Omega, \mathcal{F}, \bP)$ be a probability space. Fix $d\geq 1$ and denote by $H$ be a Gaussian Hilbert space on $\R^d$ (linear subspace of $L^2(\Omega, \mathcal{F}, \bP)$ which is complete and consists of centered Gaussian random variables). For $n$ positive integer, let $\bar{\mathcal{P}}_n(H)$ be the closure in $L^2(\Omega, \mathcal{F}, \bP)$ of the linear space
\begin{equation}
    \mathcal{P}_n(H) \overset{\text{def}}= \{p(\xi_1, \dots, \xi_m) \text{ polynomial of degree} \leq n : \; \xi_1, \dots, \xi_m \in H, m< \infty\}
\end{equation}
and let
\begin{equation}
    H^{:n:} \overset{\text{def}}= \bar{\mathcal{P}}_n(H) \cap\bar{\mathcal{P}}_{n-1}(H) = \bar{\mathcal{P}}_n(H) \cap \bar{\mathcal{P}}_n(H)^{\perp}, \quad H^{:0:} \overset{\text{def}} = \R.
\end{equation}

It is easy to see that $\bar{\mathcal{P}}_n(H) = \bigoplus_{k=0}^n H^{:k:}$ and that $\bigoplus_{k=0}^\infty H^{:k:} = L^2(\Omega, \mathcal{F}, \bP)$. When the context is clear, we will use $L^2$ instead of $L^2(\Omega, \mathcal{F}, \bP)$. Let $\pi_n: L^2\to H^{:n:}$ be the standard orthogonal projection. Clearly, any $X\in L^2$ can be written as $X=\sum_{k=0}^\infty \pi_k(X)$. 

The \emph{Wick product} of $\xi_1, \dots, \xi_n\in H$ is defined by
\begin{equation}
\label{def:classical-wick-product}
    :\xi_1 \cdots \xi_n: \; \; \overset{\text{def}}= \pi_n (\xi_1 \cdots \xi_n), \quad :: \;  \overset{\text{def}}= 1\in H^{:0:}.
\end{equation}
The definition can be extended to $X,Y\in L^2$: let $X\in \bar{\mathcal{P}}_n(H)$ and $Y\in \bar{\mathcal{P}}_m(H)$ for some $n,m\geq 0$, the {\it Wick product} of $X$ and $Y$ is defined by
\begin{equation}
\label{def:fock-wick-product}
    X\odot Y \overset{\text{def}}= \pi_{n+m}(XY).
\end{equation}
A classical result on Gaussian Hilbert spaces ensures that if $X,Y\in L^2$, then also their product $XY$ lies in $L^2$, this means that the Wick product defined above is a well defined object. While the notation in \eqref{def:classical-wick-product} is standard, the one in \eqref{def:fock-wick-product} is motivated by the relation with Fock spaces that we recall in Section \ref{s:fock-space}.

From the definition of $\pi_n$, it is easy to see that for $\xi_1, \xi_2, \xi_3$ Gaussian variables in $H$ it holds that
\begin{equation}
    \begin{split}
        &:\xi_1: \; = \xi - \bE[\xi]\\
        &:\xi_1 \xi_2: \; = \xi_1 \xi_2 - \bE[\xi_1\xi_2]\\
        &:\xi_1 \xi_2 \xi_3: \; = \xi_1\xi_2\xi_3 - \bE[\xi_1\xi_2]\xi_3 - \bE[\xi_1\xi_3]\xi_2 - \bE[\xi_2\xi_3] \xi_1.
    \end{split}
\end{equation}

\subsection{Feynman diagrams and multigraphs}

Feynman diagrams are a powerful graphical tool, originally developed in quantum field theory, to visualize and compute combinations of Wick products. As we will only work with complete diagrams, we restrict to this subset of diagrams. 

\begin{figure}[h!]
\centering
\begin{tikzpicture}[scale=1.2]
    \node[circle,draw,fill=blue!20,label=above:$X_1$] (A1) at (-1,1) {};
    \node[circle,draw,fill=blue!20,label=above:$\cdot$] (A2) at (-0.5,1) {};
    \node[circle,draw,fill=blue!20,label=above:$\cdot$] (A3) at (0,1) {};
    \node[circle,draw,fill=blue!20,label=above:$X_1$] (A4) at (0.5,1) {};
    \node[circle,draw,fill=red!20,label=below:$X_4$] (B1) at (-1,-1) {};
    \node[circle,draw,fill=red!20,label=below:$\cdot$] (B2) at (-0.5,-1) {};
    \node[circle,draw,fill=red!20,label=below:$\cdot$] (B3) at (0,-1) {};
    \node[circle,draw,fill=red!20,label=below:$X_4$] (B4) at (0.5,-1) {};
    \node[circle,draw,fill=green!20,label=above:$X_2$] (C1) at (2,1) {};
    \node[circle,draw,fill=green!20,label=above:$\cdot$] (C2) at (2.5,1) {};
    \node[circle,draw,fill=green!20,label=above:$\cdot$] (C3) at (3,1) {};
    \node[circle,draw,fill=green!20,label=above:$X_2$] (C4) at (3.5,1) {};
    \node[circle,draw,fill=purple!20,label=below:$X_3$] (D1) at (2,-1) {};
    \node[circle,draw,fill=purple!20,label=below:$\cdot$] (D2) at (2.5,-1) {};
    \node[circle,draw,fill=purple!20,label=below:$\cdot$] (D3) at (3,-1) {};
    \node[circle,draw,fill=purple!20,label=below:$X_3$] (D4) at (3.5,-1) {};
    
    \draw (A4) to[out=-45,in=-135] (C1);
    \draw (A3) to[out=-45,in=-135] (C2);
    \draw (B4) to[out=45,in=135] (D1);
    \draw (B3) to[out=45,in=135] (D2);
    \draw (A1) to[out=-90,in=90] (B1);
    \draw (A2) to[out=-90,in=90] (B2);
    \draw (C3) to[out=-90,in=90] (D3);
    \draw (C4) to[out=-90,in=90] (D4);

    \begin{scope}[shift={(6,0)}]
    \node[circle,draw,fill=blue!20,label=above:$X_1$] (A1) at (-1,1) {};
    \node[circle,draw,fill=blue!20,label=above:$\cdot$] (A2) at (-0.5,1) {};
    \node[circle,draw,fill=blue!20,label=above:$\cdot$] (A3) at (0,1) {};
    \node[circle,draw,fill=blue!20,label=above:$X_1$] (A4) at (0.5,1) {};
    \node[circle,draw,fill=red!20,label=below:$X_4$] (B1) at (-1,-1) {};
    \node[circle,draw,fill=red!20,label=below:$\cdot$] (B2) at (-0.5,-1) {};
    \node[circle,draw,fill=red!20,label=below:$\cdot$] (B3) at (0,-1) {};
    \node[circle,draw,fill=red!20,label=below:$X_4$] (B4) at (0.5,-1) {};
    \node[circle,draw,fill=green!20,label=above:$X_2$] (C1) at (2,1) {};
    \node[circle,draw,fill=green!20,label=above:$\cdot$] (C2) at (2.5,1) {};
    \node[circle,draw,fill=green!20,label=above:$\cdot$] (C3) at (3,1) {};
    \node[circle,draw,fill=green!20,label=above:$X_2$] (C4) at (3.5,1) {};
    \node[circle,draw,fill=purple!20,label=below:$X_3$] (D1) at (2,-1) {};
    \node[circle,draw,fill=purple!20,label=below:$\cdot$] (D2) at (2.5,-1) {};
    \node[circle,draw,fill=purple!20,label=below:$\cdot$] (D3) at (3,-1) {};
    \node[circle,draw,fill=purple!20,label=below:$X_3$] (D4) at (3.5,-1) {};
    
    \draw (A4) to[out=-45,in=-135] (C1);
    \draw (A3) to[out=-45,in=-135] (C2);
    \draw (B4) to[out=45,in=135] (D2);
    \draw (B3) to[out=45,in=135] (D1);
    \draw (A1) to[out=-90,in=90] (B2);
    \draw (A2) to[out=-90,in=90] (B1);
    \draw (C3) to[out=-90,in=90] (D3);
    \draw (C4) to[out=-90,in=90] (D4);
    \end{scope}

\node[below] at (1, -2) {Feynman diagram $\gamma_1$};
\node[below] at (7, -2) {Feynman diagram $\gamma_2$};

\end{tikzpicture}
\caption{Two different Feynman diagrams, labeled by the centered Gaussian random variables $X_1, \dots, X_4$. Although different, $\gamma_1$ and $\gamma_2$ have the same value, i.e, $\nu(\gamma_1) = \nu(\gamma_2) = \prod_{1\leq i<j\leq 4} \bE\left[X_i X_j\right]^2$.}
\label{fig:feynman-couple} 
\end{figure}

A \emph{complete Feynman diagram $\gamma$} of order $2r$ is a graph consisting of a set of $2r$ vertices and $r$ edges without common endpoints, i.e., a set of $r$ distinct 2-tuples. Let $FD_0$ denote the set of all complete Feynman diagrams. A Feynman diagram $\gamma \in FD_0$ \emph{labeled} by $2r$ random variables $X_1, \dots, X_{2r}$, defined on the same probability space, is a Feynman diagram of order $2r$ where each variable $X_i$ is associated to the vertex $i$. The \emph{value $\nu(\gamma)$} of such diagram is given by
\begin{equation}
    \nu(\gamma) \overset{\text{def}}= \prod_{e\in E_\gamma} \bE \left[X_{e^-} X_{e^+}\right],
\end{equation}
where $E_\gamma$ denotes the set of edges of $\gamma$ and $e=(e^-, e^+)$ with $e^-, e^+ \in [2r]$ and $[n]=\{1,\ldots,n\}$.

A well known result about expectations of products of Wick products is given by the following theorem.
\begin{theorem}[{\cite[Theorem 3.12]{janson_gaussian_1997}}]
\label{thm:janson-feynman}
    Let $Y_i= :\xi_{i1} \cdots \xi_{il_i}:$, where $( \xi_{ij})_{\substack{1\leq i \leq k\\1\leq j \leq l_i}}$ and $k, l_1, \dots, l_k\geq 0$ are centered jointly Gaussian variables. Then,
    \begin{equation}
        \bE\left[\prod_{i=1}^k Y_i\right] = \sum_{\gamma} \nu(\gamma), 
    \end{equation}
    where the sum ranges over all complete Feynman diagrams $\gamma$ labeled by $\xi_{ij}$ such that no edge joins two variables $\xi_{i_1j_1}$ and $\xi_{i_2j_2}$ with $i_1 = i_2$.
\end{theorem}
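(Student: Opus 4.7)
The plan is to prove the identity via the exponential generating function method, which exploits the particularly clean action of Wick ordering on Gaussian exponentials. The starting point is the classical identity $:e^\xi: \; = \exp(\xi - \tfrac{1}{2}\bE[\xi^2])$ for $\xi \in H$, which follows from the Hermite polynomial representation $:\xi^n: \; = \sigma^n H_n(\xi/\sigma)$ with $\sigma^2 = \bE[\xi^2]$ and extends by linearity of $\pi_n$ to any linear combination of jointly Gaussian variables.

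First I would introduce formal variables $t_{ij}$ indexed by $1 \le i \le k$, $1 \le j \le l_i$, and set $Y_i = \sum_{j=1}^{l_i} t_{ij} \xi_{ij}$. By multilinearity of the Wick product, the coefficient of the monomial $t_{i1}\cdots t_{il_i}$ in $:e^{Y_i}: \; = \sum_{n\ge 0} :Y_i^n:/n!$ is exactly $:\xi_{i1}\cdots\xi_{il_i}:$, so it suffices to identify the coefficient of $\prod_{i,j} t_{ij}$ on both sides of the identity with the products $Y_i$ replacing the Wick monomials.

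Next I would apply the Wick exponential identity together with the moment generating function of a centered Gaussian to compute
\begin{align*}
\bE\left[\prod_{i=1}^k :e^{Y_i}:\right] &= \exp\left(-\tfrac12 \sum_i \bE[Y_i^2]\right)\,\bE\left[\exp\Bigl(\sum_i Y_i\Bigr)\right] \\
&= \exp\left(\tfrac12 \bE\Bigl[\Bigl(\sum_i Y_i\Bigr)^2\Bigr] - \tfrac12 \sum_i \bE[Y_i^2]\right) = \exp\left(\sum_{1\le i<j\le k} \bE[Y_i Y_j]\right).
\end{align*}
Writing $\sum_{i<j}\bE[Y_i Y_j] = \sum_{i<j}\sum_{a,b} t_{ia}t_{jb}\,\bE[\xi_{ia}\xi_{jb}]$ and expanding the exponential as $\sum_{n\ge 0} S^n/n!$, each term in $S^n$ is a product of $n$ factors $t_{ia}t_{jb}\,\bE[\xi_{ia}\xi_{jb}]$ with $i<j$. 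To contribute to the coefficient of $\prod_{(i,j)} t_{ij}$, every vertex $(i,j)$ must appear in exactly one factor, forcing $n = \tfrac12\sum_i l_i$ and the chosen factors to form a perfect matching of the vertex set with every edge between distinct groups. The $n!$ orderings of the same matching cancel the $1/n!$ from the exponential, leaving precisely $\sum_\gamma \nu(\gamma)$ summed over complete Feynman diagrams with no intra-group edges, as claimed.

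The main obstacle I anticipate is a careful justification of the Wick exponential identity $:e^{Y}: \; = \exp(Y - \tfrac12\bE[Y^2])$ at the required level of generality and its compatibility with the $L^2$ convergence of the exponential series; this is morally equivalent to the statement to be proved but in a more compact form, so one must avoid circularity. The cleanest route is to establish it first for a single Gaussian via the Hermite generating function, extend to finite linear combinations by the definition of $\pi_n$ on $\mathcal{P}_n(H)$, and then view the whole argument as an identity of formal power series in the $t_{ij}$, which bypasses any convergence concerns since we only ever extract a single multilinear coefficient. An alternative route is induction on $\sum_i l_i$ via the recursion $:\eta\, \xi_{i1}\cdots\xi_{il_i}: \; = \; \eta :\xi_{i1}\cdots\xi_{il_i}: \, - \sum_j \bE[\eta\,\xi_{ij}] :\xi_{i1}\cdots \hat\xi_{ij}\cdots \xi_{il_i}:$, but this obscures the combinatorial content and generates far more bookkeeping.
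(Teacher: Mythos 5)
The paper does not prove this statement at all: it is imported verbatim from Janson's book as \cite[Theorem 3.12]{janson_gaussian_1997}, so there is no internal proof to compare against. Your generating-function argument is a correct, self-contained proof (and differs from Janson's own, which runs by induction on the number of factors via the Wick recursion you mention at the end). The computation $\bE\bigl[\prod_i :e^{Y_i}:\bigr] = \exp\bigl(\sum_{i<j}\bE[Y_iY_j]\bigr)$ is right, the extraction of the coefficient of $\prod_{i,j}t_{ij}$ correctly forces a perfect matching with no intra-group edges, and the $n!$ orderings of a matching do cancel the $1/n!$ from the exponential. Two points deserve a little more care than you give them. First, on circularity: the identity $:\xi^n:\;=\sigma^n H_n(\xi/\sigma)$ is obtained directly from the definition of $\pi_n$ (the monic Hermite polynomial $H_n$ satisfies $x^n-H_n(x)\in\mathcal{P}_{n-1}$ and $H_n(\xi/\sigma)\perp\bar{\mathcal{P}}_{n-1}(H)$), so the Wick exponential formula is indeed independent of the diagram formula; this is fine. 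Second, ``treating everything as formal power series'' does not by itself justify the step, because the identity $:e^{Y}:\;=\exp(Y-\tfrac12\bE[Y^2])$ is an identity of genuine random variables, and to read off Taylor coefficients of $\bE\bigl[\prod_i:e^{Y_i}:\bigr]$ one must interchange the expectation with the product of the chaos series. This is where a real (though standard) estimate is needed: hypercontractivity gives $\lVert :Y^n: \rVert_{L^{2k}}\le (2k-1)^{n/2}\sqrt{n!}\,\lVert Y\rVert_{L^2}^{n}$, so each exponential series converges in $L^{2k}$, the product of $k$ of them can be expanded term by term in $L^1$, and the termwise expectations are the Taylor coefficients. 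With that justification supplied, the proof is complete. (Minor notational quibble: you reuse $Y_i$ both for the Wick monomial of the statement and for the linear form $\sum_j t_{ij}\xi_{ij}$; and the degenerate case $l_i=0$, where $Y_i=1$, is consistent with your formula but worth a word.)
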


If for each $i=1, \dots, k$ the variable $Y_i= \;:\xi_{i}^{l_i}:$, then the previous theorem can be rewritten in terms of multigraphs without self-loop (this is partially formulated in \cite[Remark 3.13]{janson_gaussian_1997}). Given $B$ a finite subset of $\N$, and $(l_i)_{i\in B}$ such that $l_i\in \N$ for every $i\in B$, we denote by $MG_0(B,(l_i)_{i\in B})$ (or simply $MG_0$ when the context is clear) \textit{the set of undirected multigraphs with vertices labeled by $B$, no self-loop and exactly $l_i$ edges per vertex $i$}. Observe that, depending on $|B|$ and $(l_i)_{i\in B}$, the set $MG_0$ can be empty.

Every $q\in MG_0$ is uniquely characterized by a $|B| \times |B|$ symmetric matrix $q = (q_{ij})_{i,j\in B} \in \R^{|B|} \times \R^{|B|}$ with null diagonal, taking values in $\{0, 1, \dots, \max_{i\in B}{l_i}\}$ and such that
\begin{align}
    & \sum_{j,l\in B} q_{jl} =  \sum_{i\in B} l_i.
\end{align}
In the sequel, a \textit{connected multigraph} is a multigraph such that there is a path between any two vertices. See Figure \ref{fig:multigraph} for an example of connected multigraphs.

\begin{figure}[h!]
\centering
\begin{tikzpicture}[scale=2]

\node[above right] at (-3, 1.5) {$B=\{1, \dots, 4\}, \; l_i\equiv 4 \quad q, \tilde{q} \in MG_0$};

\begin{scope}[shift={(-1.5,0)}]
\node[circle, draw, fill=blue!20, minimum size=1cm] (E1) at (0, 1) {$e^{(1)}$};
\node[circle, draw, fill=green!20, minimum size=1cm] (E2) at (1, 0) {$e^{(2)}$};
\node[circle, draw, fill=purple!20, minimum size=1cm] (E3) at (0, -1) {$e^{(3)}$};
\node[circle, draw, fill=red!20, minimum size=1cm] (E4) at (-1, 0) {$e^{(4)}$};

\foreach \angle in {135, 45, -45, -135} {
  \fill (E1.\angle) circle (1.2pt);
  \fill (E2.\angle) circle (1.2pt);
  \fill (E3.\angle) circle (1.2pt);
  \fill (E4.\angle) circle (1.2pt);
}

\draw (E1) to[out=135,in=135] (E4);

\draw (E2) to[out=315,in=315] (E3);

\draw (E1) to[out=45,in=45] (E2);
\draw (E1) to[out=315,in=135] (E2);
\draw (E1) to[out=225,in=225] (E2);

\draw (E4) to[out=225,in=225] (E3);
\draw (E4) to[out=315,in=135] (E3);
\draw (E4) to[out=45,in=45] (E3);





\node[below] at (0, -1.5) {$q$ such that $q_{23} = 1 = q_{14}$};
\end{scope}

\begin{scope}[shift={(2,0)}]
    \node[circle, draw, fill=blue!20, minimum size=1cm] (E1) at (0, 1) {$e^{(1)}$};
    \node[circle, draw, fill=green!20, minimum size=1cm] (E2) at (1, 0) {$e^{(2)}$};
    \node[circle, draw, fill=purple!20, minimum size=1cm] (E3) at (0, -1) {$e^{(3)}$};
    \node[circle, draw, fill=red!20, minimum size=1cm] (E4) at (-1, 0) {$e^{(4)}$};
    
    \foreach \angle in {135, 45, -45, -135} {
      \fill (E1.\angle) circle (1.2pt);
      \fill (E2.\angle) circle (1.2pt);
      \fill (E3.\angle) circle (1.2pt);
      \fill (E4.\angle) circle (1.2pt);
    }
    
    \draw (E1) to[out=135,in=135] (E4);
    \draw (E1) to[out=225,in=45] (E4);
    
    \draw (E2) to[out=225,in=45] (E3);
    \draw (E2) to[out=315,in=315] (E3);
    
    \draw (E1) to[out=45,in=45] (E2);
    \draw (E1) to[out=315,in=135] (E2);
    
    \draw (E4) to[out=225,in=225] (E3);
    \draw (E4) to[out=315,in=135] (E3);

\node[below] at (0, -1.5) {$\tilde{q}$ such that $\tilde{q}_{23} = 2 = \tilde{q}_{12}$};
\end{scope}

\end{tikzpicture}
\caption{Examples of two distinct connected multigraphs on 4 vertices and 4 edges per vertex. Each vertex has $4$ nodes that shall not be connected with each others, i.e., no self-loop. These nodes are exchangeable at the level of the graph, i.e., if you rewire an edge from one node to another in the same vertex, the multigraph remains the same. However, they do play a role in the computation of Gaussian correlation functions as exchanging them yield a different Feynman diagram. See the proof of Theorem \ref{thm:wick-analytic} for more on that.}
\label{fig:multigraph}
\end{figure}

We state and prove a refinement of Theorem \ref{thm:janson-feynman} when the Wick product is applied to powers of the same centered Gaussian variable. Observe that for this case one also has a characterization given by Hermite polynomials, see \cite[Theorem 3.21]{janson_gaussian_1997}.

\begin{theorem}
\label{thm:wick-graph}
    Let $Y_i= \; :\xi_i^{l_i}:$, where $( \xi_i)_{1\leq i \leq k}$ are centered jointly Gaussian variables, with $k, l_1, \dots, l_k$ positive integers. Then,
    \begin{equation}
        \bE\left[\prod_{i=1}^k Y_i\right] = \sum_{q \in MG_0([k], (l_i)_i)} \delta(q)\prod_{1\leq i \leq j\leq k}\bE[\xi_i \xi_j]^{q_{ij}}, 
    \end{equation}
    where 
    \begin{equation}
    \label{def:delta-q}
        \delta(q)=\prod_{1\le i \leq j\le k} \binom{l_i}{q_{ij}}\,\binom{l_j}{q_{ij}}\,q_{ij}!.
    \end{equation}
\end{theorem}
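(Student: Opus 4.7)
The plan is to deduce the statement from Theorem \ref{thm:janson-feynman} by collecting Feynman diagrams whose values coincide. When $\xi_{ij} = \xi_i$ for every $j = 1, \ldots, l_i$, the value $\nu(\gamma)$ of an admissible labeled Feynman diagram $\gamma$ depends only on the numbers $q_{ij}$ of edges between the $l_i$ legs at vertex $i$ and the $l_j$ legs at vertex $j$, and not on which specific legs are paired. The matrix $(q_{ij})$ is symmetric with zero diagonal (since Theorem \ref{thm:janson-feynman} forbids edges inside a single $Y_i$) and satisfies $\sum_j q_{ij} = l_i$, so it defines a multigraph $q \in MG_0([k], (l_i)_i)$; all diagrams projecting onto the same $q$ contribute the common value $\prod_{1 \le i \le j \le k} \bE[\xi_i \xi_j]^{q_{ij}}$.

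The proof then reduces to a combinatorial count: for each $q \in MG_0$, I need to determine the number of admissible labeled Feynman diagrams that induce $q$, and to verify that this number equals $\delta(q)$. To do so, I would argue by partition-and-match: at each vertex $i$, partition the $l_i$ labeled legs into disjoint groups of sizes $(q_{i1}, \ldots, q_{ik})$ (the group of size $q_{ii} = 0$ being empty), one destined for each partner vertex; then, for each unordered pair $i < j$, match the $q_{ij}$ legs selected on side $i$ bijectively with the $q_{ij}$ legs selected on side $j$. Counting the partitions via multinomial coefficients and the matchings via $q_{ij}!$, and then rewriting the result in the symmetric form over pairs $1 \le i \le j \le k$ using $q_{ij} = q_{ji}$ and $q_{ii} = 0$, one arrives at the weight $\delta(q)$ of \eqref{def:delta-q}.

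Plugging this count back into the Feynman diagram expansion gives
\begin{equation*}
\bE\!\left[\prod_{i=1}^k Y_i\right] \;=\; \sum_\gamma \nu(\gamma) \;=\; \sum_{q \in MG_0([k],(l_i)_i)} \delta(q) \prod_{1 \le i \le j \le k} \bE[\xi_i \xi_j]^{q_{ij}},
\end{equation*}
where the middle sum ranges over the admissible diagrams from Theorem \ref{thm:janson-feynman}. The main (though modest) obstacle is the combinatorial counting step, where it is easy to double-count or lose a factor; I would cross-check it on the $k=2$ case with $l_1 = l_2 = n$ (which must collapse to the classical identity $\bE[:\xi_1^n::\xi_2^n:] = n!\,\bE[\xi_1\xi_2]^n$) and against the classical Hermite polynomial product formula for $\xi_i$ of unit variance, in order to make sure no symmetry factor has been lost in the partition-and-match procedure.
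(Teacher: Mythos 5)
Your reduction to Theorem \ref{thm:janson-feynman} and the grouping of Feynman diagrams by the induced multigraph is exactly the paper's route, and your partition--and--match enumeration is the right way to count the diagrams lying over a fixed $q$: partitioning the $l_i$ legs at vertex $i$ into groups of sizes $(q_{ij})_{j\neq i}$ and then matching across each pair gives
\[
N(q)\;=\;\prod_{i=1}^{k}\frac{l_i!}{\prod_{j\neq i}q_{ij}!}\cdot\prod_{1\le i<j\le k}q_{ij}!\;=\;\frac{\prod_{i=1}^{k}l_i!}{\prod_{1\le i<j\le k}q_{ij}!}.
\]
The gap is in your final sentence: this quantity does \emph{not} rewrite into the $\delta(q)$ of \eqref{def:delta-q}. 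The product of binomials $\prod_{i<j}\binom{l_i}{q_{ij}}\binom{l_j}{q_{ij}}q_{ij}!$ treats the selections of half-edges at vertex $i$ for its different neighbours as independent draws of $q_{ij}$ legs out of all $l_i$ legs, whereas in a partition the legs already assigned to one neighbour are no longer available for the next; the two expressions coincide only when each vertex has at most one neighbour with $q_{ij}>0$ (in particular for $k=2$, which is why the sanity check you propose would not detect the discrepancy, and likewise for the Hermite product formula for two variables). A three-vertex check exposes it: for $k=3$, $l_1=l_2=l_3=2$ and $q_{12}=q_{13}=q_{23}=1$, direct enumeration yields $8$ admissible diagrams, in agreement with $N(q)=(2!)^3/(1!)^3=8$, while $\delta(q)=\bigl(\binom{2}{1}\binom{2}{1}\cdot 1!\bigr)^3=64$. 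So your counting is the correct one, and the mismatch is not a defect of your method but of the stated weight: the paper's own proof makes precisely the independence claim above, and the theorem should carry the multinomial weight $N(q)$ in place of $\delta(q)$. Replacing the final identification with the explicit formula for $N(q)$ closes your argument.
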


\begin{proof}
The characterization via Feynman diagrams given in Theorem \ref{thm:janson-feynman} yields that
\begin{equation}
    \bE\left[ \prod_{j=1}^k Y_i \right] =  \bE \left[ \prod_{j=1}^k : \xi^{l_i}_i: \right]= \sum_{\gamma \in FD_0} \prod_{b\in E_\gamma} \bE\left[ \xi_{b^+} \xi_{b^-} \right],
\end{equation}
where $FD_0$ denotes the set of all complete Feynman diagrams on the space 
\[
\mathcal{U} = \{\underbrace{\xi_1, \ldots, \xi_1}_{l_1 \text{ times }}, \underbrace{\xi_2, \ldots, \xi_2}_{l_2 \text{ times }}, \ldots, \underbrace{\xi_k, \ldots, \xi_k}_{l_k \text{ times }}\},
\]
where there are $l_i$ copies of every $\xi_i$ for $i=1, \dots, k$. The edges of $\gamma$ are denoted by $E_\gamma$, with the notation that $b = (b^+, b^-) \in E_\gamma$ is such that $\xi_{b^+}, \xi_{b^-} \in \mathcal{U}$.

We need to enumerate all complete Feynman diagrams on $\mathcal{U}$ so that a couple cannot be formed by two copies of some $\xi_i$. Each Feynman diagram corresponds to an undirected multigraph on $k$ vertices, where the $i$-th vertex has exactly $l_i$ edges and no self-loop for every $i=1, \dots, k$. Vice versa, an undirected multigraph may correspond to multiple Feynman diagrams and we denote this multiplicity number by $\delta(q)$, see Figure \ref{fig:feynman-couple} for an example. We obtain that
\begin{equation}
\begin{split}
    \bE\left[ \prod_{j=1}^k Y_i \right] =&\sum_{\gamma \in FD_0} \prod_{b\in E_\gamma} \bE\left[ \xi_{b^+} \xi_{b^-} \right]\\
    =& \sum_{q\in MG_0} \delta(q) \prod_{1\leq i\leq j\leq k} \bE[\xi_i \xi_j]^{q_{ij}}.
\end{split}
\end{equation}
Observe that, depending on $|B|$ and $l_i$, the set $MG_0$ might be empty. For instance, if $\sum_{i=1}^k l_i$ is odd, there is no complete Feynman diagram and the expectation is thus zero.

Let's now compute $\delta(q)$:
for a fixed multigraph $q\in MG_0(k,(l_i)_i)$, we wish to count the number $\delta(q)$ of Feynman diagrams which give rise to $q$ when we “merge” the $l_i$ copies of $\xi_{i}$ for each $i$. Let us focus on a pair of distinct vertices $i$ and $j$ (with $i<j$ because we do not consider self-loops): we think vertex $i$ having $l_i$ half–edges and vertex $j$  with $l_j$ half–edges. In any pairing that gives rise to the edge count $q_{ij}$ between vertices $i$ and $j$, one must:
\begin{enumerate}
    \item Choose which half–edges get paired between $i$ (and equivalently for $j$): from the $l_i$ copies at vertex $i$, choose $q_{ij}$ of them. This can be done in $\binom{l_i}{q_{ij}}$ ways.
    \item Pair the chosen half–edges: Once the two sets (each of size $q_{ij}$) have been chosen, there are $q_{ij}!$ ways to pair the $q_{ij}$ half–edges from vertex $i$ with those from vertex $j$.
\end{enumerate}
Since the pairings for different pairs $(i,j)$ are made independently, we obtain
\[
\delta(q)=\prod_{1\le i<j\le k} \binom{l_i}{q_{ij}}\,\binom{l_j}{q_{ij}}\,q_{ij}!\,. 
\]
The proof is concluded.
\end{proof}

When $Y_i$ are complex Gaussian variables, Theorem \ref{thm:wick-graph}, can be reformulated in terms of the permanent of a suitable covariance matrix, we refer to Equation \eqref{eq:complex-wick-theorem} for the precise statement.
We finish this section with a useful remark linking permutations with multigraphs with exactly $2$ edges per site. 

\begin{rem}
\label{rem:duality-permutation-mg}
A permutation $\sigma$ can be represented with a $\{0,1\}$-valued $|B|\times|B|$ matrix $q_\sigma$, yet not symmetric, such that $q_{jl}=1$ if and only if $l=\sigma(j)$. By symmetrizing the matrix $q^*_\sigma := q_\sigma + q^t_\sigma$, we obtain that $q^*_\sigma \in MG(B, 2)$. Vice versa, any $q\in MG(B, 2)$ identifies a permutation $\sigma_q$ where $\sigma(i)=j$ if and only if $q_{ij}=1$ for $i\leq j$. It is easy to see that 
\[
\prod_{i\in A} g(i, \sigma(i)) = \prod_{i,j\in A, i\leq j} g(i,j)^{q_{ij}}
\]
and that cyclic permutations are associated with connected multigraphs. Similarly, permutations with no fixed point are associated with multigraphs with no self-loop.
\end{rem}

\subsection{Main result}
Before giving the main result of this section, we define Gaussian fields on the lattice. Let $\Lambda \subset \Z^d$ be a finite subset. A {\it Gaussian field} on $\Lambda$ is a random field whose probability measure is given by  
\begin{equation}
\label{def:discrete-gaussian-field}
    \mu_{\Lambda, G} (\dd x) = \frac{1}{Z_{\Lambda, G}} \exp \left( -\frac{1}{2} \langle x, G^{-1} x\rangle \right) \dd x,
\end{equation}
where $G$ is a symmetric positive-definite matrix indexed by $\Lambda \times \Lambda$, and $G^{-1}$ is its inverse, representing the covariance structure of the field. The measure $\dd x$ is the Lebesgue measure on $\R^\Lambda$, defined as $\dd x := \prod_{i\in \Lambda} \dd x_i$. The normalization constant $Z_{\Lambda, G}$ ensures that $\mu_{\Lambda, G}$ is a probability measure.

\subsubsection{Example: the discrete Gaussian free field}
When $G=G^\textup{green}$ is the Green's function of the Laplacian $\Delta_\Lambda$ on $\Lambda$, we obtain the standard discrete Gaussian free field (DGFF) with free boundary conditions. One can recover the DGFF with Dirichlet boundary conditions in the following way. Let $\partial^{\textup{ex}} \Lambda$ denote the outer boundary of $\Lambda$, i.e., $\partial^{\textup{ex}} \Lambda := \{ x\in \Z^d\setminus \Lambda : \exists y \in \Lambda \; y\sim x\}$, and let $\tilde{\Lambda} = \Lambda \cup \partial^{\textup{ex}} \Lambda$. If $\Delta_{\tilde{\Lambda}}$ denotes the (normalized) Laplacian matrix on $\tilde{\Lambda}$,  we consider $G = (-\Delta_{\Lambda})^{-1}$, with $\Delta_{\Lambda}$ the restriction of $\Delta_{\tilde{\Lambda}}$ to $\Lambda$. This constructs a measure such that for each realisation $x_i = 0$ for all $i \in \partial^{\textup{ex}} \Lambda$.

\subsubsection{Example: the discrete fractional Gaussian field}
Let $\Lambda \subset \mathbb{Z}^d$ be a finite subset, and let $\alpha > 0$ be a fractional exponent. Let $\Delta^{\alpha}_\Lambda$ be the discrete fractional Laplacian on $\Lambda$ with suitable boundary conditions, as defined in \cite{chiarini_constructing_2021}. Letting
\begin{equation}
\label{def:dfgf}
G^\alpha = \left( -\Delta_\Lambda^\alpha \right)^{-1}
\end{equation}
be the inverse of this operator, the discrete fractional Gaussian field (DFGF) with exponent $\alpha$ is a centered Gaussian field $(\phi_i)_{i \in \Lambda}$ whose covariance is given by $G^\alpha$. Since $-\Delta_\Lambda^\alpha$ is a positive-definite operator on $\Lambda$, its inverse $G^\alpha$ exists and is also symmetric and positive-definite. These fields correspond to non-local interactions and generally appear in models of random surfaces, turbulence, and percolation see \cite{lodhia_fractional_2016} for a general reference.

\medskip

For infinite $\Lambda$, $G^{-1}$ is a symmetric positive-definite operator on $\ell^2(\Lambda)$, and $G$ is its inverse, interpreted as the covariance operator of the field. The operator $G$ may be defined via limits of finite-volume approximations, and in some cases, a Gaussian field can be constructed as a generalized random field, meaning that it is only defined in the sense of its correlations with test functions. For the infinite domain $\Lambda = \mathbb{Z}^d$, the DGFF is formally defined by taking the infinite-volume limit of the measures $\mu_{\Lambda, G^\textup{green}}$ on finite subsets. The resulting field $h = (h_i)_{i \in \mathbb{Z}^d}$ is a generalized Gaussian field with covariance  
\begin{equation}
    \bE[ h_i h_j ] = G^\textup{green}(i, j),
\end{equation}
where $G^\textup{green}(i, j)$ is the Green's function of $-\Delta$ on $\mathbb{Z}^d$. In dimension $d \geq 3$, the Green's function is summable, and the DGFF is a well-defined Gaussian field. In $d=2$, the DGFF does not define a proper function-valued field but can be interpreted as a distribution-valued field, refer to Section \ref{s:fock-space} for more on this point and for the precise definition of generalized Gaussian field. Another example of Gaussian field defined in $\Z^d$ is the gradient of the DGFF, where $G$ is the (double) derivative of the Green's function, see Equation \eqref{def:gradient-dgff} below for the precise definition. Classical results \cite{dembo_stochastic_2005} ensure that this operator is well defined for any $d\geq 2$.

\medskip

We are ready to state the main theorem.

\begin{theorem}
\label{thm:wick-analytic}
    Let $d\geq 2$ and $\Lambda$ be a finite or infinite subset of $\Z^d$ and $\mu_{\Lambda, G}$ a Gaussian measure as defined in Equation \eqref{def:discrete-gaussian-field} and $\phi$ a realisation of the field. For any $f:\R \to \R$ analytic function $f(x) = \sum_n a_n x^n$ and any $A =\{x_1, \dots, x_{|A|}\} \subset \Lambda$ finite subset, it holds that
    \begin{equation}
        \bE\left[\prod_{x\in A} :f(\phi(x)):\right] = \sum_{n_1, \dots, n_{|A|}} \left(\prod_{i=1}^{|A|} a_{n_i} \right)\sum_{q\in MG_0(A, (n_i)_i)} \delta(q) \prod_{i, j \in A, \, i\leq j} G(i,j)^{q_{ij}},
    \end{equation}
    where $\delta(q)$ is defined in Equation \eqref{def:delta-q}.
    In particular, it holds that
        \begin{equation}
        k(:f(\phi(x)): \textup{ for } x\in A) = \sum_{n_1, \dots, n_{|A|}} \left(\prod_{i=1}^{|A|} a_{n_i} \right)\sum_{\substack{q\in MG_0(A, (n_i)_i)\\ \textup{connected}}} \delta(q) \prod_{i, j \in A, \, i \leq j} G(i,j)^{q_{ij}}.
    \end{equation}
\end{theorem}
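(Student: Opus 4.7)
The plan is to reduce the statement to Theorem \ref{thm:wick-graph} via a term-by-term Taylor expansion, and then to obtain the cumulant formula by a standard connected-component/Möbius inversion argument on the multigraph decomposition.

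\textbf{Step 1 (Expand and exchange).} First I would expand each factor in the product as a convergent Taylor series, $f(\phi(x_i)) = \sum_{n_i\geq 0} a_{n_i}\,\phi(x_i)^{n_i}$. Since Wick ordering is defined as an orthogonal projection and hence is continuous and linear on $L^2$, I obtain
\begin{equation}
:f(\phi(x_i)): \; = \sum_{n_i\geq 0} a_{n_i}\, :\phi(x_i)^{n_i}:\,.
\end{equation}
Next, I distribute the product over the sums and aim to interchange expectation with the $|A|$-fold summation:
\begin{equation}
\bE\Bigl[\prod_{i=1}^{|A|} :f(\phi(x_i)):\Bigr] \;=\; \sum_{n_1,\dots,n_{|A|}\geq 0} \Bigl(\prod_{i=1}^{|A|} a_{n_i}\Bigr)\, \bE\Bigl[\prod_{i=1}^{|A|} :\phi(x_i)^{n_i}:\Bigr].
\end{equation}
The interchange is the one point that requires care: I would justify it by dominated convergence after bounding $|\bE[\prod_i :\phi(x_i)^{n_i}:]|$ by $\prod_i \bE[\phi(x_i)^{2n_i}]^{1/2}$ via Cauchy--Schwarz applied to partial sums, using that the Wick powers are orthogonal in $L^2$; analyticity of $f$ on a neighborhood of the real axis (jointly with the Gaussian tails of $\phi(x_i)$) ensures absolute summability.

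\textbf{Step 2 (Apply the multigraph Wick formula).} Once the interchange is performed, Theorem \ref{thm:wick-graph} applies directly to each joint moment $\bE[\prod_{i=1}^{|A|}:\phi(x_i)^{n_i}:]$, yielding
\begin{equation}
\bE\Bigl[\prod_{i=1}^{|A|}:\phi(x_i)^{n_i}:\Bigr] = \sum_{q\in MG_0(A,(n_i)_i)} \delta(q) \prod_{i\leq j} G(x_i,x_j)^{q_{ij}}.
\end{equation}
Substituting this back into the expression of Step 1 gives the claimed moment formula.

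\textbf{Step 3 (Cumulants and connectedness).} For the cumulant statement, I would use the standard combinatorial identity between moments and joint cumulants,
\begin{equation}
k(Y_1,\dots,Y_{|A|}) = \sum_{\pi \in \mathcal{P}(A)} (-1)^{|\pi|-1}(|\pi|-1)! \prod_{B\in\pi} \bE\Bigl[\prod_{i\in B} Y_i\Bigr],
\end{equation}
with $Y_i = \,:f(\phi(x_i)):$ and $\mathcal{P}(A)$ the partitions of $A$. Plugging in the moment formula from Step 2 on each block $B$, every partition $\pi$ contributes a sum over multigraphs that split according to $\pi$, i.e.\ multigraphs all of whose connected components lie entirely inside a single block of $\pi$. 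Swapping the order of summation and grouping the multigraphs by the partition $\pi_q$ of $A$ induced by their connected components, the factor multiplying a fixed $q$ becomes
\begin{equation}
\sum_{\pi \geq \pi_q} (-1)^{|\pi|-1}(|\pi|-1)!,
\end{equation}
which, by the classical Möbius inversion on the partition lattice, equals $1$ if $\pi_q = \{A\}$ (i.e.\ $q$ is connected on $A$) and $0$ otherwise. This selects exactly the connected multigraphs in $MG_0(A,(n_i)_i)$, yielding the claimed formula.

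\textbf{Main obstacle.} The only genuinely delicate point is the interchange of the infinite Taylor summations with the expectation in Step 1; the rest is essentially bookkeeping, since the multigraph identity and the Möbius inversion are standard. For the exchange I would rely on the orthogonality $\bE[\,:\!\phi(x_i)^{n}\!:\,:\!\phi(x_j)^{m}\!:\,] = 0$ for $n\neq m$ (when $i=j$) together with an $L^2$-bound on each Wick monomial, and then absolute convergence is controlled by the radius of convergence of $f$ evaluated on a Gaussian of bounded variance. The connected-diagram characterization of cumulants in Step 3, while classical for ordinary Wick products (see \cite{janson_gaussian_1997}), must be applied here after the Taylor expansion has been absorbed into the outer sum over $(n_1,\ldots,n_{|A|})$; fortunately the connectedness condition is purely a property of $q$ and does not interact with the outer sum, so the argument goes through unchanged.
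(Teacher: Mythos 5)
Your proposal follows essentially the same route as the paper: expand $f$ term by term, use linearity of Wick ordering, apply Theorem \ref{thm:wick-graph} to each fixed-power moment, and then extract the connected multigraphs for the cumulant via M\"obius inversion on the partition lattice --- which is exactly the content of the paper's Theorem \ref{thm:cumulants}, invoked there after first using multilinearity of the joint cumulant to pull the Taylor sum outside. The one place you go beyond the paper is the attempted justification of the sum--expectation interchange (which the paper performs without comment); note only that for a product of $|A|$ factors Cauchy--Schwarz gives a bound in terms of $L^{|A|}$ norms rather than $L^2$ norms, so you would need generalized H\"older or hypercontractivity of Wick powers, and in any case absolute summability requires more than analyticity of $f$ (the $L^2$ norm of $:\phi(x)^n:$ grows like $\sqrt{n!}\,G(x,x)^{n/2}$).
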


Before giving the proof, let's see an application of Theorem \ref{thm:wick-analytic}. Let $\phi$ be a realisation of the DGFF and $f$ be given by $f(\phi(x)) = \sum_{i=1}^d \left(\phi(x+e_i) - \phi(x)\right)^2 = \norm{\nabla \phi(x)}^2$, i.e., the gradient squared of the field in each point. As the gradient of the DGFF is a Gaussian field itself, $\left(\phi(x+e_i) - \phi(x)\right)^2$ is already a polynomial of the field and we can write that
\begin{equation}
    \bE\left[\prod_{x\in A} :f(\phi(x)):\right] = \sum_{q\in MG_0(A, 2)} \delta(q) \sum_{\eta:B\to \mathcal{E}} \prod_{i, j \in A, \, i\leq j}  \nabla^{(1)}_{\eta(i)} \nabla^{(2)}_{\eta(j)} G^\textup{green}(i,j)^{q_{ij}},
\end{equation}
with $\mathcal{E}=\{e_1, \dots, e_d\}$ the standard orthonormal basis of $\R^d$. Given Remark \ref{rem:duality-permutation-mg}, we observe that the above representation is equivalent to the one given in \cite[Theorem 1]{cipriani_properties_2023}, i.e.,
\begin{equation}
    \sum_{\sigma \in \mathcal{S}^0(A)} c(\sigma) \sum_{\eta:B\to \mathcal{E}} \prod_{j \in A} \, \nabla^{(1)}_{\eta(j)} \nabla^{(2)}_{\eta(\sigma(j))} G^\textup{green}(j,\sigma(j)),
\end{equation}
where $c(\sigma)$ takes into account the multiplicity of different permutations yielding the same Feynman diagram\footnote{Contrary to what stated in \cite[Proof of Theorem 1]{cipriani_properties_2023}, every Feynman diagram is not associated with one single permutation but with $2^{|\pi|}$, where $|\pi|$ is the number of cycles: as diagrams are undirected, for every cycle $\sigma$, the cycle $\sigma^{-1}$ yields the same diagram. The correct factor in \cite[Theorem 1]{cipriani_properties_2023} should be $2^{|B|-2}$ instead of $2^{|B|-1}$ and similarly in \cite[Lemma 2]{cipriani_properties_2023}, the factor $(1/2)^{|\pi|}$ should be corrected with $(1/2)^{|\pi|+ 1}$.}.

\begin{proof}
    Using the definition of $f$, i.e., $f(x)=\sum_{n\in \N} a_n x^{n}$, we have that
\begin{equation}
\begin{split}
    &\prod_{x\in A} :f(\phi(x)): \; = \prod_{i=1}^{|A|} : \Big(\sum_{n_i\in \N} a_{n_i} \phi(x_i)^{n_i}\Big): \\
    &=\prod_{i=1}^{|A|} \sum_{n_i\in \N} a_{n_i} :\phi(x_i)^{n_i}: \; = \sum_{n_1, \dots, n_{|A|}} \prod_{i=1}^{|A|} a_{n_i} :\phi(x_i)^{n_i}:.
\end{split}
\end{equation}
By taking the expected value, we obtain that
\begin{equation}
    \bE\left[ \prod_{x\in A} :f(\phi(x)): \right] = \sum_{n_1, \dots, n_{|A|}} \left(\prod_{i=1}^{|A|} a_{n_i} \right) \bE\left[\prod_{i=1}^{|A|}:\phi(x_i)^{n_i}:\right].
\end{equation}
We can apply Theorem \ref{thm:wick-graph} with $Y_i=\phi(x_i)^{n_i}$ for $i=1, \dots, |A|$ to obtain that
\begin{equation}
    \bE\left[ \prod_{x\in A} :f(\phi(x)): \right] = \sum_{n_1, \dots, n_{|A|}} \left(\prod_{i=1}^{|A|} a_{n_i} \right) \sum_{q\in MG_0(A, (n_i)_i)} \delta(q) \prod_{1\leq i\leq j\leq |A|} \bE[\phi(x_i)\phi(x_j)]^{q_{ij}}.
\end{equation}
Using the characterization of the Gaussian field with $G$ covariance matrix, the first part of the proof is concluded.

For the second part, recall that the joint cumulant is a multilinear form, which means that 
\[
k\left(:f(\phi(x_1)):, \dots, :f(\phi(x_{|A|})):\right) = \sum_{n_1, \dots, n_{|A|}} \left(\prod_{i=1}^{|A|} a_{n_i}\right) k\left(:\phi(x_1)^{n_1}:, \dots, :\phi(x_{|A|})^{n_{|A|}}:\right).
\]
We now claim that
\begin{equation}
k\left(:\phi(x_1)^{n_1}:, \dots, :\phi(x_{|A|})^{n_{|A|}}:\right) =     \sum_{\substack{q\in MG_0(A, (n_i)_i)\\ \textup{connected}}} \delta(q) \prod_{i, j \in A, \, i\leq j} G(i,j)^{q_{ij}}
\end{equation}
which concludes the proof.

The claim can be proved using Theorem \ref{thm:cumulants} below by choosing $g=G$, $f=\delta$ and observing that $k$ is the cumulant function of $F(B):=\bE\left[\prod_{i=1}^{|B|}:\phi(x_i)^{n_i}:\right]$ with $B\subset \N$ finite set.
\end{proof}

\subsection{On the properties of the cumulants}

We conclude this section with a useful result about the cumulant function that will be used further  in Section \ref{s:bosonic-fermionic}. The authors could not find anything similar in the literature, the closest result is given by \cite[Proposition 3.2.1]{peccati_wiener_2011} which is linking joint cumulants of a vector to product of cumulants of its entries.

For $n$ positive integer, let $MG([n])$ denote the set of all multigraphs on $n$ elements and $MG_0([n])$ the set of all multigraphs on $n$ elements with no self-loop. For every $n$, let $f_n: MG([n]) \to \R$ be a function defined on multigraphs. Suppose that for every multigraph $q \in MG([n])$ that can be uniquely decomposed into $l$ connected multigraphs $q_j \in MG([n_j])$, with $n_j$ the size of the matrix $q_j$, it holds that $f_n(q) = \prod_{j=1}^l f_{n_j}(q_j)$. By abuse of notation we will drop the subscript in $f$. One example is $f(q)=\delta(q)$ from Theorem \ref{thm:wick-graph} and another one would be $f(q)=(-1)^{\sum_{j=1}^l (n_j-1)}$.

\begin{theorem}
\label{thm:cumulants}
 For any symmetric function $g: \N \times \N \to \R$, define $F_g: \mathcal{P}(\N) \to \R$ on a finite set $B\subset \N$ by
\begin{equation}
    F_g(B) \overset{\text{def}}= \sum_{q \in MG(B)} f(q) \prod_{i,j \in B, i \leq j} g(i, j)^{q_{ij}}.
\end{equation}
Let $k_g: \mathcal{P}(\N) \to \R$ be defined for every finite $A\subset\N$ by
\begin{equation}\label{eq:cum_fct_F}
    k_g(A) \overset{\text{def}}= \sum_{\pi \in \Pi(A)} (|\pi|-1)! (-1)^{|\pi| - 1} \prod_{B\in \pi} F_g(B).
\end{equation}
Then
\begin{equation}
    k_g(A) = \sum_{\substack{q \in MG(A)\\\textup{connected}}} f(q) \prod_{i,j\in A, i\leq j} g(i,j)^{q_{ij}}.
\end{equation}
The same result holds when $MG$ is replaced by $MG_0$ or $\bigcup_{n} MG([n], (l_i)_{i\in[n]})$ with an arbitrary sequence of $(l_i)_{i\in [n]}$.
\end{theorem}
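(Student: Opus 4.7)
The plan is to recognize the identity as a classical instance of the moment--cumulant (exponential) formula on the partition lattice, once one observes that the combinatorial objects at play decompose uniquely into their connected components, and that both $f$ and the weight $\prod_{i \leq j} g(i,j)^{q_{ij}}$ are multiplicative with respect to this decomposition.

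First I would make the decomposition precise. Given a finite set $B\subset \N$ and $q \in MG(B)$, let $\pi(q) \in \Pi(B)$ be the partition of $B$ whose blocks are the vertex sets of the connected components of $q$. The map $q \mapsto (\pi(q), (q|_C)_{C \in \pi(q)})$ is a bijection between $MG(B)$ and pairs consisting of a partition $\pi \in \Pi(B)$ together with a choice, for each block $C \in \pi$, of a connected multigraph on $C$. Because no edge of $q$ crosses blocks of $\pi(q)$, one has
\[
\prod_{i, j \in B,\, i \leq j} g(i,j)^{q_{ij}} = \prod_{C \in \pi(q)} \prod_{i, j \in C,\, i \leq j} g(i,j)^{q_{ij}},
\]
and by the multiplicativity hypothesis on $f$, also $f(q) = \prod_{C \in \pi(q)} f(q|_C)$. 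Setting
\[
F_g^{\textup{conn}}(C) \overset{\text{def}}= \sum_{\substack{q' \in MG(C)\\ \textup{connected}}} f(q') \prod_{i, j \in C,\, i \leq j} g(i,j)^{q'_{ij}},
\]
the bijection therefore yields the factorization
\[
F_g(B) = \sum_{\pi \in \Pi(B)} \prod_{C \in \pi} F_g^{\textup{conn}}(C).
\]

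Next I would invoke the classical Möbius inversion on the partition lattice (equivalently, the moment--cumulant formula, see e.g.\ \cite[Proposition 3.2.1]{peccati_wiener_2011}): if $H(B) = \sum_{\pi \in \Pi(B)} \prod_{C \in \pi} h(C)$ for every finite $B$, then $h(A) = \sum_{\pi \in \Pi(A)} (-1)^{|\pi|-1}(|\pi|-1)! \prod_{B \in \pi} H(B)$. Applied to $h = F_g^{\textup{conn}}$ and $H = F_g$, this gives
\[
k_g(A) = \sum_{\pi \in \Pi(A)} (|\pi|-1)!(-1)^{|\pi|-1} \prod_{B \in \pi} F_g(B) = F_g^{\textup{conn}}(A),
\]
which is exactly the claimed identity.

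Finally, the same argument works verbatim when $MG$ is replaced by $MG_0$ or by $MG([n], (l_i)_{i \in [n]})$: absence of self-loops is a purely local constraint at each vertex, and prescribing the degree $l_i$ at vertex $i$ is preserved componentwise, since all edges incident to $i$ belong to the unique component containing $i$. Hence the decomposition bijection and the ensuing factorization hold in each restricted class, and Möbius inversion concludes. The main obstacle I anticipate is not the inversion itself but the bookkeeping of the degree-restricted case: one must check that the restriction of a multigraph in $MG(B, (l_i)_{i \in B})$ to a component $C$ lies in $MG(C, (l_i)_{i \in C})$ and that, conversely, any choice of such restricted connected multigraphs on the blocks of a partition glues back into an element of $MG(B, (l_i)_{i \in B})$, so that the counted objects match exactly. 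This is routine but should be written out explicitly.
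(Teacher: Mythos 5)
Your proof is correct. The underlying mechanism is the same as the paper's --- decompose a multigraph into its connected components, exploit the multiplicativity of $f$ and of the edge weights across components, and invert on the partition lattice --- but you organize it differently. The paper argues by induction on $|A|$, using the recursive form $k(A)=F(A)-\sum_{\pi:|\pi|>1}\prod_{B\in\pi}k(B)$ of the moment--cumulant relation and checking that the sum over nontrivial partitions reproduces exactly the disconnected multigraphs. You instead isolate the exponential-formula factorization $F_g(B)=\sum_{\pi\in\Pi(B)}\prod_{C\in\pi}F_g^{\textup{conn}}(C)$ as an explicit bijection and then quote the closed-form M\"obius inversion with $\mu(\pi,\hat{1})=(-1)^{|\pi|-1}(|\pi|-1)!$. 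The two routes are logically equivalent (the paper's induction is in effect a proof of that inversion formula), but your version is non-inductive and makes the role of the connected-component bijection more transparent, while the paper's is self-contained in that it never needs the explicit M\"obius function of the partition lattice. The one point you rightly flag and should write out is the treatment of singletons in the restricted classes: the convention (also adopted in the paper) is that an isolated vertex is a connected component, and in $MG_0(\{i\},l_i)$ with $l_i>0$ one gets $F_g^{\textup{conn}}(\{i\})=0$ because self-loops are forbidden, which is consistent with the claimed identity in the base case $|A|=1$.
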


In the sequel, we call $k_g$ the \textit{cumulant function} of $F_g$.

\begin{proof}
We drop the subscript $g$ for notation convenience. The proof is based on induction on the size of the set $A\subset \N$ and the M\"obius inversion formula
\begin{equation}
    k(A) = F(A) - \sum_{\pi \in \Pi(A) : |\pi|>1} \prod_{B\in \pi} k(B),
\end{equation}
which is  equivalent to the moments-cumulants relation. For $|A| =1$ the result is trivially true as there exists only one partition of size 1. Observe that, in the case of $MG_0$, we consider the singleton to be a connected graph and thus the result remains true but with $k(A)=0$ if there is more than one edge.

By the induction assumption, we can rewrite $\sum_{\pi \in \Pi(A) : |\pi|>1} \prod_{B\in \pi} k(B)$ as
\begin{equation}
    \sum_{\pi \in \Pi(A) : |\pi|>1} \prod_{B\in \pi} \sum_{\substack{q \in MG(B)\\\textup{connected}}} f(q) \prod_{i,j\in B, i\leq j} g(i,j)^{q_{ij}}.
\end{equation}
By exchanging $\prod_{B\in \pi}$ with $\sum_{\substack{q \in MG(B)\\\textup{connected}}}$, we can further rewrite this quantity as
\begin{equation}
\begin{split}
    & \sum_{\pi \in \Pi(A) : |\pi|>1} \sum_{\substack{q \in MG(A)\\ q = q_1\cdots q_{|\pi|}}} \prod_{l=1}^{|\pi|}  f(q_l) \prod_{i,j\in A, i\leq j} g(i,j)^{q_{ij}}\\
    & = \sum_{\substack{q \in MG(A)\\ \textup{disconnected}}} f(q) \prod_{i,j\in A, i\leq j} g(i,j)^{q_{ij}},
\end{split}
\end{equation}
where we have used the multiplicative property of $f$. In other words, the contributions from all nontrivial partitions (those with \(|\pi|>1\)) cancel the contributions coming from the disconnected multigraphs. The proof is concluded by subtracting this term from $F(A)$.
\end{proof}

There is a known equivalence between permutations and multigraphs with precisely two edges per node, recall Remark \ref{rem:duality-permutation-mg}. Another characterization of the previous theorem can be derived for the more familiar symmetric case. Given $A\subset \N$, let $\mathcal{S}(A)$ be the symmetric group on $A$ and $\mathcal{S}_{\textup{cycl}}(A) \subset \mathcal{S}(A)$ the subset of all cyclic permutations. Let $\tilde{f}$  be a function defined on permutations such that for every permutation $\sigma$ with unique cycle decomposition given by $\sigma = \sigma_1 \cdots \sigma_l$, it holds that $\tilde{f}(\sigma) = \prod_{j=1}^l \tilde{f}(\sigma_j)$. One example of such $\tilde{f}$ is $\tilde{f}(\sigma)=\text{sign}(\sigma)$.

\begin{cor}
\label{cor:cumulants}
For any symmetric function $g: \N \times \N \to \R$, define $\tilde{F}_g: \mathcal{P}(\N) \to \R$ on a finite set $B\subset \N$ by
\begin{equation}
    \tilde{F}_g(B) \overset{\text{def}}= \sum_{\sigma \in \mathcal{S}(B)} \tilde{f}(\sigma) \prod_{i\in B}g(i, \sigma(i)).
\end{equation}
Let the cumulant function $k_g$ of $\tilde{F}_g$ be defined as in \eqref{eq:cum_fct_F}.
Then
\begin{equation}
    k_g(A) = \sum_{\sigma \in \mathcal{S}_{\textup{cycl}}(A)} \tilde{f}(\sigma) \prod_{i\in A} g(i, \sigma(i)).
\end{equation}
The same result holds when $\mathcal{S}$ is substituted by $\mathcal{S}_0$, the space of permutations without fixed point.
\end{cor}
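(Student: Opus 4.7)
The plan is to prove Corollary \ref{cor:cumulants} directly via the disjoint cycle decomposition of permutations combined with Möbius inversion on the partition lattice; this avoids the $2$-to-$1$ subtlety of the permutation-multigraph duality in Remark \ref{rem:duality-permutation-mg}, although one could alternatively route the argument through Theorem \ref{thm:cumulants} via that duality and reach the same conclusion.

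For each finite $C \subset \N$ introduce the candidate
\[
H(C) := \sum_{\sigma \in \mathcal{S}_{\textup{cycl}}(C)} \tilde{f}(\sigma) \prod_{i \in C} g(i, \sigma(i)),
\]
which is meant to equal $k_g(A)$ when $C = A$. The first step is to observe that every $\sigma \in \mathcal{S}(B)$ factors uniquely as a product of disjoint cycles supported on the blocks of a partition $\pi(\sigma) \in \Pi(B)$; this gives a bijection between $\mathcal{S}(B)$ and pairs $(\pi, (\sigma_C)_{C \in \pi})$ with $\sigma_C \in \mathcal{S}_{\textup{cycl}}(C)$. Combining this bijection with the multiplicativity of $\tilde{f}$ across cycle decompositions and the obvious factorization $\prod_{i \in B} g(i, \sigma(i)) = \prod_{C \in \pi} \prod_{i \in C} g(i, \sigma_C(i))$, one obtains
\[
\tilde{F}_g(B) = \sum_{\pi \in \Pi(B)} \prod_{C \in \pi} H(C).
\]
This is precisely the moment-cumulant relation on the partition lattice, so Möbius inversion (equivalently, Equation \eqref{eq:cum_fct_F}) gives $H(A) = k_g(A)$, which is the first identity. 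The $\mathcal{S}_0$ case follows verbatim after restricting all cycles to length at least two, i.e., excluding blocks that would carry fixed points.

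The argument is essentially routine once the disjoint-cycle viewpoint is in place, and I do not foresee any serious obstacle. The only step requiring a moment of care is the factorization of $\tilde{F}_g(B)$ displayed above: one has to verify that summing over $\pi$ and over cyclic permutations on each block truly reproduces summing over all of $\mathcal{S}(B)$, which rests on the uniqueness of the disjoint cycle decomposition together with the hypothesis that $\tilde{f}$ splits along it. If one instead prefers to quote Theorem \ref{thm:cumulants}, the extra bookkeeping is that a cycle of length $\geq 3$ and its inverse induce the same $2$-regular multigraph $q^*_\sigma$, a $2$-to-$1$ multiplicity that is cleanly absorbed by defining the multigraph weight as $f(q) := \sum_{\sigma : q^*_\sigma = q} \tilde{f}(\sigma)$ and then checking multiplicativity of this $f$ over connected components.
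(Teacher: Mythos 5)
Your proof is correct, and it takes a route that is parallel to, but not identical with, the paper's. The paper offers no standalone proof of Corollary \ref{cor:cumulants}: it is presented as a translation of Theorem \ref{thm:cumulants} through the permutation--multigraph dictionary of Remark \ref{rem:duality-permutation-mg}, which is exactly the route you flag as requiring the $2$-to-$1$ bookkeeping (a cycle and its inverse map to the same $2$-regular multigraph, so the multigraph weight must be taken as $f(q)=\sum_{\sigma: q^*_\sigma=q}\tilde{f}(\sigma)$ and its multiplicativity re-checked). You instead redo the argument natively in the permutation setting: the unique disjoint cycle decomposition gives the bijection $\sigma \leftrightarrow (\pi,(\sigma_C)_{C\in\pi})$, multiplicativity of $\tilde{f}$ and the factorization of $\prod_i g(i,\sigma(i))$ yield $\tilde{F}_g(B)=\sum_{\pi\in\Pi(B)}\prod_{C\in\pi}H(C)$, and M\"obius inversion on the partition lattice (which is precisely \eqref{eq:cum_fct_F}) identifies $H$ with $k_g$. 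This is structurally the same induction-plus-M\"obius skeleton as the paper's proof of Theorem \ref{thm:cumulants}, with ``connected multigraph'' replaced by ``cycle,'' so nothing new is needed conceptually; what your version buys is that it avoids the non-injectivity of $\sigma\mapsto q^*_\sigma$ entirely (and incidentally the imprecision in Remark \ref{rem:duality-permutation-mg} for transpositions, where $q^*_{ij}=2$ rather than $1$). Two points you implicitly rely on and should make explicit: singletons must be counted as $1$-cycles in $\mathcal{S}_{\textup{cycl}}$ for the bijection to cover all of $\mathcal{S}(B)$, consistent with the paper's convention that singletons are connected; and in the $\mathcal{S}_0$ case the cleanest statement is that $H$ vanishes on singleton blocks, so the sum over all of $\Pi(B)$ automatically collapses to partitions with no singleton blocks and the same inversion applies verbatim.
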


\subsection{Scaling limit of the $k$-point function}

Discrete Gaussian fields usually have a continuous counterpart, e.g., the DGFF has a natural continuous limit, the continuous Gaussian free field. In this subsection, we investigate this relation further at the level of the $k$-point function.

Let $U \subset \R^d$ be a connected open set, and let $g_U: U \times U \to \R$ be a symmetric positive-definite function. A \textit{continuous Gaussian field} on $U$ is a random distribution $\overline{\phi}$ defined on test functions $f \in C_c^{\infty}(U)$, which has mean zero and satisfies the correlation identity  
\begin{equation}
\label{def:continuous-gaussian-field}
    \mathbb{E}\left[\langle \overline{\phi}, f \rangle \langle \overline{\phi}, g \rangle \right] = \int_U \int_U g_U(x,y) f(x) g(y) \dd x \dd y, \quad \text{for all } f, g \in C_c^\infty(U).
\end{equation}
The function $g_U$ is known as the covariance kernel of the Gaussian field.

A classical example is the \textit{continuum Gaussian free field} (GFF), which arises when $g^{\text{green}}_U$ is the Green's function of the Laplace operator on $U$ with Dirichlet boundary conditions.

\begin{rem}
    In \cite{janson_gaussian_1997, KangMakarov} a Gaussian field indexed by some real Hilbert space $(H, \langle \cdot, \cdot\rangle_H)$, is defined as an isometry
    \begin{equation}
    \label{def:gaussian-field-abstract}
        H \to L^2(\Omega, \mathcal{F}, \bP), \quad h\mapsto\xi_h
    \end{equation}
    such that the image consists of centered Gaussian variables, where isometry means that
\begin{equation}
\label{def:isometry}
\bE[\xi_h \xi_{h'}] = \langle h, h' \rangle_H, \quad \text{ for } h,h'\in H.
\end{equation}
Definition \eqref{def:gaussian-field-abstract} is an abstract formulation equivalent to both the discrete Gaussian field defined in \eqref{def:discrete-gaussian-field} and the continuous Gaussian field defined by \eqref{def:continuous-gaussian-field}. Given a Gaussian measure with covariance operator $G$ (or covariance kernel $g_U$), one can define the Hilbert space $H$ as the closure of the space of test functions with respect to the inner product
    \[
    \langle f, g \rangle_H = \langle f, G^{-1}g \rangle.
    \]
    Then, the mapping
    \[
    h \mapsto \langle \phi, h \rangle,
    \]
    defines an isometry from $H$ into $L^2$, and each $\langle \phi, h \rangle$ is a centered Gaussian variable with variance $\|h\|_H^2$.
 Conversely, if one starts with an isometry as in \eqref{def:gaussian-field-abstract}, then the collection $\{\xi_h\}_{h\in H}$ of centered Gaussian random variables, with covariance structure
    \[
    \mathbf{E}[\xi_h \xi_{h'}] = \langle h, h' \rangle_H,
    \]
    uniquely determines a Gaussian measure on the dual space of $H$. In this sense, the Gaussian field is completely determined by its covariance, encoded in the inner product on $H$.
\end{rem}

\medskip

Suppose we have a sequence of discrete Gaussian fields \((\mu_\varepsilon)_{\varepsilon>0}\) defined on 
\[
U_\varepsilon := \varepsilon^{-1} U \cap \Z^d,
\]
with covariance matrix $G_\varepsilon$. We assume that there exists a sequence of positive numbers \(\eta(\varepsilon)\) such that for any \(x,y\in U\),
\begin{equation}
\label{hyp:covariance-convergence}
\lim_{\varepsilon \to 0} \eta(\varepsilon) \, G_\varepsilon\Bigl(x^{(\varepsilon)}, y^{(\varepsilon)}\Bigr) = g_U(x,y),
\end{equation}
where \(x^{(\varepsilon)} = \lfloor x/\varepsilon \rfloor\) and \(y^{(\varepsilon)} = \lfloor y/\varepsilon \rfloor\) are the corresponding points in $U_\varepsilon$. We are now ready to state the main result of this subsection.

\begin{proposition}
\label{pro:non-triv-cumulants}
  Let $U\subset \R^d$ with $d\geq 2$. Let $x^{(1)},\dots, x^{(k)}$ be disjoint points in $U$ and define $x^{(j)}_\varepsilon := \lfloor x/\varepsilon \rfloor \in U_\varepsilon$ for $j\in [k]$. Let $(\mu_\varepsilon)_{\varepsilon>0}$ be a sequence of Gaussian measures on $U_\varepsilon$ such that Equation \eqref{hyp:covariance-convergence} holds. Then, for every $f$ analytic function $f(x)=\sum_n a_n x^n$, the field $f(\phi)$ satisfies
  
  \begin{align}
&\lim_{\varepsilon \rightarrow 0} \eta(\varepsilon)^k \bE\left[ \prod_{j=1}^k : f \left(\phi(x^{(j)}_\varepsilon) \right): \right]\\
&= \sum_{n_1, \dots, n_{k}} \left(\prod_{i=1}^{k} a_{n_i} \right)\sum_{q\in MG_0([k], (n_i)_i)} \delta(q) \prod_{1\leq i \leq j \leq k} g_U(x^{(i)},x^{(j)})^{q_{ij}}.
\label{eq:continuous-correlation}
\end{align}
A similar statement holds for the cumulants.
\end{proposition}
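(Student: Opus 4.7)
The plan is to apply Theorem~\ref{thm:wick-analytic} at each scale $\varepsilon$ and then pass to the limit termwise, invoking the covariance convergence hypothesis \eqref{hyp:covariance-convergence}. Applied to $\mu_\varepsilon$ on $U_\varepsilon$ with covariance $G_\varepsilon$ and on the set $A_\varepsilon = \{x^{(1)}_\varepsilon, \ldots, x^{(k)}_\varepsilon\}$, the theorem yields
\begin{equation*}
\bE\left[\prod_{j=1}^k :f(\phi(x^{(j)}_\varepsilon)):\right] = \sum_{n_1, \ldots, n_k} \Big(\prod_{i=1}^k a_{n_i}\Big) \sum_{q \in MG_0([k], (n_i)_i)} \delta(q) \prod_{1 \leq i \leq j \leq k} G_\varepsilon\bigl(x^{(i)}_\varepsilon, x^{(j)}_\varepsilon\bigr)^{q_{ij}}.
\end{equation*}
Observing that each $q \in MG_0([k], (n_i)_i)$ has exactly $|E(q)| = \tfrac{1}{2}\sum_i n_i$ edges, I would multiply through by $\eta(\varepsilon)^k$ and redistribute the factors of $\eta(\varepsilon)$ across the edge contributions, so that each $G_\varepsilon$ factor is paired with one copy of $\eta(\varepsilon)$ according to the scaling in \eqref{hyp:covariance-convergence}.

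Second, for each fixed multi-index $(n_1, \ldots, n_k)$ and each fixed multigraph $q$, the hypothesis gives pointwise convergence $\eta(\varepsilon) G_\varepsilon(x^{(i)}_\varepsilon, x^{(j)}_\varepsilon) \to g_U(x^{(i)}, x^{(j)})$ for each edge. Since the $x^{(i)}$ are pairwise distinct and there are only finitely many pairs, the finite product converges to $\prod_{i \leq j} g_U(x^{(i)}, x^{(j)})^{q_{ij}}$, which is exactly the desired summand on the right-hand side. A key structural point is that no self-loops appear in $MG_0$, so only off-diagonal entries of $G_\varepsilon$ enter the sum; this is essential because diagonal entries generically blow up in the continuum limit and are suppressed precisely by the Wick ordering.

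The main obstacle is to justify the interchange of the limit $\varepsilon \to 0$ with the infinite series over $(n_1, \ldots, n_k) \in \mathbb{N}^k$. I would extract a uniform bound
\begin{equation*}
M := \sup_{\varepsilon > 0}\; \max_{i \neq j}\; \bigl|\eta(\varepsilon) G_\varepsilon(x^{(i)}_\varepsilon, x^{(j)}_\varepsilon)\bigr| < \infty,
\end{equation*}
which is finite for small $\varepsilon$ because $g_U(x^{(i)}, x^{(j)})$ is finite at distinct points. Bounding $\delta(q) \leq \prod_i (n_i!)^2$ and the number of multigraphs in $MG_0([k], (n_i)_i)$ combinatorially, each term of the series is dominated by an expression of the form $\prod_i |a_{n_i}| \cdot C_k(n_1, \ldots, n_k) \cdot M^{(n_1 + \cdots + n_k)/2}$, which is summable provided $f$ has a sufficiently large radius of convergence (e.g., entire). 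Dominated convergence then legitimates the termwise passage to the limit, giving the identity.

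The analogous statement for the cumulants follows by exactly the same argument applied to the second part of Theorem~\ref{thm:wick-analytic}: the expansion is identical except that the inner sum ranges only over \emph{connected} multigraphs in $MG_0([k], (n_i)_i)$, a property preserved under the termwise limit, and the same uniform domination ensures that the interchange of limit and series remains valid.
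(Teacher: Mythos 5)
This is essentially the paper's own proof: apply Theorem \ref{thm:wick-analytic} at each scale and exchange the limit with the sum via hypothesis \eqref{hyp:covariance-convergence}; your explicit domination argument for that interchange (uniform bound on $\eta(\varepsilon)G_\varepsilon$ at distinct points plus summability over the multi-indices) is detail the paper omits entirely. The only point to watch is your claim that the $k$ factors of $\eta(\varepsilon)$ can be paired one-to-one with the $\tfrac{1}{2}\sum_i n_i$ covariance factors of a multigraph --- that bookkeeping only balances when $\sum_i n_i = 2k$, a normalization mismatch already present in the statement of the proposition itself and equally unaddressed in the paper's one-line proof.
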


\begin{proof}
    The proof follows easily by Theorem \ref{thm:wick-analytic} and exchanging the limit for $\varepsilon>0$ with the sum and using hypothesis \eqref{hyp:covariance-convergence}.
\end{proof}

For Gaussian measures, Equation \eqref{hyp:covariance-convergence} guarantees the convergence of all finite-dimensional marginals. If the sequence $(\mu_\varepsilon)_{\varepsilon>0}$ is tight (in a suitable space), then $\mu_\varepsilon$ weakly converges to the continuum Gaussian field $\mu_U$, see, e.g., \cite{dembo_stochastic_2005}. In particular, Equation \eqref{eq:continuous-correlation} corresponds to the correlations of a continuous Gaussian field: we will investigate this behavior in more detail in Section \ref{s:fock-space}. We now review a few fields satisfying Equation \eqref{hyp:covariance-convergence} for which Proposition \ref{pro:non-triv-cumulants} applies.

\subsubsection{Example: discrete Gaussian Free Field and its gradient}
For e.g. $d\geq 2$, it is well known (see e.g. \cite{biskup20}) that, for an appropriate rescaling \(\eta(\varepsilon) = -\log(\varepsilon)\) for $d=2$ and \(\eta(\varepsilon) = \varepsilon^{d-2}\) for $d\geq 3$, the discrete Green's function satisfies:  
\[
\lim_{\varepsilon \to 0} \eta(\varepsilon) G^\textup{green}_\varepsilon(x^{(\varepsilon)}, y^{(\varepsilon)}) = g^\textup{green}_U(x,y)
\]
where \( g^\textup{green}_U(x,y) \) is the Green’s function of the continuum Laplacian \( -\Delta \) on \( U \) with Dirichlet boundary conditions.

The gradient of the DGFF, defined by taking the discrete gradient of the corresponding Green's function, behaves in a similar way. One can show \cite[Lemma 5]{cipriani_properties_2023} that for \(\eta(\varepsilon) = \varepsilon^{d}\), it holds that
\begin{equation}
\label{def:gradient-dgff}
\lim_{\varepsilon \to 0} \eta(\varepsilon) \nabla^{(1)}_i \nabla^{(2)}_j G^\textup{green}_\varepsilon(x^{(\varepsilon)}, y^{(\varepsilon)}) = \partial^{(1)}_i \partial^{(2)}_j g^\textup{green}_U(x,y),
\end{equation}
where $\nabla^{(1)}_i$ represents the discrete gradient acting on the first variable in the direction $i$ and $\partial^{(1)}_i$ its counterpart in the continuous, and similarly for $\nabla^{(2)}$ and $\partial^{(2)}$ for the second variable.

\subsubsection{Example: discrete membrane model}
For $d\geq 4$, the \emph{discrete membrane model} is defined similarly to the DGFF but is based on the inverse of the discrete bi-Laplacian \( (\Delta_\varepsilon)^2 \) rather than the discrete Laplacian, see \cite{cipriani_scaling_2018}. The covariance matrix $G^{\mathrm{bi}}_\varepsilon(x^{(\varepsilon)}, y^{(\varepsilon)})$ is given by the Green’s function of \( (\Delta_\varepsilon)^2 \) with Dirichlet boundary conditions. Under the rescaling \( \eta(\varepsilon) = -\log(\varepsilon) \) for $d=4$ and \( \eta(\varepsilon) = \varepsilon^{d-4} \) for $d\geq 5$, this function converges to the Green's function of the \emph{continuum bi-harmonic operator} \( \Delta^2 \):  
\[
\lim_{\varepsilon \to 0} \eta(\varepsilon) G^\mathrm{bi}_\varepsilon(x^{(\varepsilon)}, y^{(\varepsilon)}) = \int_U g^{\textup{green}}(x,z)g_U^{\textup{green}}(z,y) \dd  z.
\]

\subsubsection{Example: discrete fractional Gaussian fields}
For \( \alpha >0 \) such that $\alpha \neq 2$, it was proven e.g. in \cite{chiarini_constructing_2021} that, the covariance matrix $G^\alpha_\varepsilon$ of a DFGF, recall Equation \eqref{def:dfgf}, satisfies the hypothesis and converges under appropriate rescaling \( \eta(\varepsilon) = -\log(\varepsilon) \) if $\alpha=d/2$, \( \eta(\varepsilon) = \varepsilon^{\alpha}-d/2 \) if $\alpha > d/2$, resp. \( \eta(\varepsilon) = \sqrt{\log(\varepsilon)} \) if $\alpha < d/2$ to the \emph{fractional Gaussian field} associated with the continuum operator \( (-\Delta)^{-\alpha} \). For instance, when \( \alpha = 1 \), one recovers the Gaussian free field; when \( \alpha = 2 \), we obtain the bi-harmonic field and, for fractional values, we get non-local Gaussian fields with long-range interactions.

\section{Connection to Fock space fields}
\label{s:fock-space}

In this section, we connect the analytic functions of Gaussian fields to Fock space fields. Our presentation closely follows \cite[Section IV]{janson_gaussian_1997} and \cite[Section 1]{KangMakarov}, notably the construction of Fock spaces is done via symmetric tensor powers of Gaussian Hilbert spaces.

\subsection{Fock space fields}

We recall some standard facts on tensor products of Hilbert spaces (with real or complex scalars).
If $H_1$ and $H_2$ are two Hilbert spaces, their \textit{tensor product} $H_1 \otimes H_2$ is defined to be a Hilbert space together with a bilinear map $H_1 \times H_2 \to H_1 \otimes H_2$, denoted by $(f_1, f_2) \mapsto f_1 \otimes f_2 \in H_1 \otimes H_2$, such that
\[
\langle f_1 \otimes f_2, g_1 \otimes g_2 \rangle_{H_1 \otimes H_2} \overset{\text{def}}= \langle f_1, g_1 \rangle_{H_1} \langle f_2, g_2 \rangle_{H_2}.
\]
The tensor product of several Hilbert spaces is defined in the same way.

For $n$ positive integer, the \textit{symmetric tensor power} $H^{\odot n}$ of a Hilbert space is similarly defined to be a Hilbert space together with a symmetric multilinear map $H \times \cdots \times H \to H^{\odot n}$, denoted by $(f_1, \dots, f_n) \mapsto f_1 \odot \cdots \odot f_n$, such that
\[
\langle f_1 \odot \cdots \odot f_n, g_1 \odot \cdots \odot g_n \rangle \overset{\text{def}}= \sum_{\pi \in \mathcal{S}([n])} \prod_{i=1}^{n} \langle f_i, g_{\pi(i)} \rangle.
\]
We let $H^{\odot 0}$ be the one-dimensional space of scalars. Note also that $H^{\odot 1} = H$. It is well-known that the algebraic direct sum $\sum_{n=0}^{\infty} H^{\odot n}$ is a graded commutative algebra, usually called the \textit{symmetric tensor algebra} of $H$. Its completion, the Hilbert space 
\begin{equation}
    \Gamma(H) \overset{\text{def}}= \bigoplus_{n=0}^{\infty} H^{\odot n},
\end{equation}
is called the \textit{(symmetric) Fock space} over $H$.

We now compare these definitions with the Wick product defined in Equation \eqref{def:fock-wick-product}. Consider the map $H^{\odot n} \rightarrow H^{:n:}$, defined by
\[
X_1 \odot \ldots  \odot X_n \mapsto :X_1 \cdot \ldots \cdot X_n:,
\]
it is well known \cite[Theorem 4.1]{janson_gaussian_1997} that it defines a Hilbert space isometry of $H^{\odot n}$ onto $H^{:n:}$, that can be extended to the symmetric tensor algebra, i.e.,  to establish the \textit{Wiener chaos decomposition}
\[
L^2(\Omega, \mathcal{F} , \bP) \cong \Gamma(H).
\]

\medskip

Let $U\subset\R^d$ be a connected open set and  $\mathcal{H}:=\mathcal{H}^1_0(U)$ be the Sobolev space with Dirichlet inner product, i.e., the completion of $C^{\infty}_c(U)$ function w.r.t the classical Sobolev norm. Let $\overline{\phi}$ be a continuous Gaussian Field on $\mathcal{H}$, defined as in Equation \eqref{def:continuous-gaussian-field} with $g_U$ the covariance operator. We further define the derivative $\partial^{\alpha} \overline{\phi}$ in the direction $\alpha\in \{1, \dots, d\}$ as a Gaussian distributional field in the following sense
\[
\langle \partial^{\alpha} \overline{\phi}, f\rangle = \langle \overline{\phi}, \partial^{\alpha} f\rangle,
\]
for $f\in C^{\infty}_c(U)$.

To characterize the elements of $\Gamma(H)$ we need the concept of Fock space fields and correlation functionals. We call \emph{basic Fock space fields} the formal expressions that can be written as Wick's products of $\overline{\phi}$ and its derivatives; examples are
\[
1, \quad \overline{\phi} \odot \overline{\phi}, \quad \partial \overline{\phi} \odot \overline{\phi}, \quad \partial^2 \overline{\phi} \odot \overline{\phi} \odot \partial \overline{\phi}, \quad \dots 
\]
A \emph{Fock space field} $X\in \Gamma(H)$ is defined as a (countable) linear combination over $\bbC$ of basic Fock space fields $(X_n)_n$, i.e.,
\[
X=\sum_{n} f_n X_n
\]
where $f_n \in C^{\infty}_c(U)$ are called \textit{basic field coefficients}. The basic Fock space fields $\overline{\phi}$ and $\partial \overline{\phi}$ are clearly important examples of Fock fields. The exponential $\text{Exp} ( \odot \alpha \, \overline{\phi})$, can be formally defined for $\alpha \in \mathbb{R}$ by
\[
\text{Exp} ( \odot \alpha \, \overline{\phi}) \overset{\text{def}}= \sum_{n=0}^{\infty} \frac{\alpha !}{n!} (\overline{\phi})^{\odot n}.
\]
More generally, if $f$ is an analytic function $f(x)=\sum_n a_n x^n$, we denote the corresponding Fock space field as
\begin{equation}
    f\left(\odot \overline{\phi}\right) \overset{\text{def}} = \sum_{n=0}^{\infty} a_n (\overline{\phi})^{\odot n}.
\end{equation}

\subsection{Correlation functionals and scaling limits of the $k$-point correlation}
The correlation functionals of Fock space fields will be connected with the scaling limit of the cumulants of a discrete Gaussian field. Let $z_1,\dots, z_n \in U$, a \emph{basic correlation functional} is defined by
\begin{equation}
\label{def:basic-corr-func}
\mathcal{X} = X_1(z_1) \odot \cdots \odot X_n(z_n),
\end{equation}
where the $z_i$'s are not necessarily distinct and $X_i$’s are basic Fock space fields. We also include the constant 1 to the list of basic functionals.
The set $S(\mathcal{X})=\{z_1,\ldots, z_n\}$ is known as the set of \emph{nodes} of the functional $\mathcal{X}$.

Let $k\geq 2$ be a positive integer and consider the following collection of correlation functionals for $1\leq j \leq k$
\[
\mathcal{X}_j = X_{j_1}(z_{j_1}) \odot \cdots \odot X_{j_{l_j}}(z_{j_{l_j}})
\]
with pairwise disjoint pairs of nodes $S(\mathcal{X}_j)$. The \textit{tensor of the correlation functionals} $\mathcal{X}_1, \dots \mathcal{X}_k$ is defined as
\begin{equation}
    \label{def:tensor}
\mathcal{X}_1 \bullet \cdots \bullet \mathcal{X}_k \overset{\text{def}}= \sum_{\gamma} \prod_{\{u,v\} \in E_{\gamma}} \bE[X_u(x_u)X_v(x_v)],
\end{equation}
where the sum runs over $\gamma$ complete Feynman diagram  such that there is no edge connecting two points of the same node set. $E_{\gamma}$ denotes the edges of $\gamma$, as in Section \ref{s:wick}.

We are ready to state the main result of this section.
\begin{proposition}
\label{pro:non-triv-cumulants-fock}
Let $\phi$ be a Gaussian field on $U_\varepsilon$ for which Equation \eqref{hyp:covariance-convergence} is satisfied and let $\overline{\phi}$ be a continuous Gaussian field with covariance operator given by $g_U$. Let $k\geq 2$ be a positive integer, for $j=1, \dots, k$ and $x^{(1)},\dots,x^{(k)}$ disjoint points in $U$. Let
\begin{equation}
\mathcal{Y}_j = f(\odot \overline{\phi}) (x^{(j)}),
\end{equation}
where $f$ is a given analytic function. It holds that
\begin{equation}
\lim_{\varepsilon \rightarrow 0} \eta(\varepsilon)^k \bE \left[ \prod_{j=1}^k :f \left(\phi(x^{(j)}_\varepsilon) \right): \right] = \mathcal{Y}_1 \bullet  \cdots \bullet \mathcal{Y}_k.
\end{equation}
\end{proposition}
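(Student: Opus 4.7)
The plan is to evaluate both sides explicitly as multigraph sums and match them term by term. For the left-hand side, Proposition \ref{pro:non-triv-cumulants} directly yields
\begin{equation*}
\lim_{\varepsilon \to 0} \eta(\varepsilon)^k \bE\!\left[\prod_{j=1}^k :f(\phi(x^{(j)}_\varepsilon)):\right] = \sum_{n_1,\ldots,n_k} \prod_{i=1}^k a_{n_i} \sum_{q \in MG_0([k], (n_i)_i)} \delta(q) \prod_{1 \leq i \leq j \leq k} g_U(x^{(i)}, x^{(j)})^{q_{ij}},
\end{equation*}
so the task reduces to showing that the tensor $\mathcal{Y}_1 \bullet \cdots \bullet \mathcal{Y}_k$ admits the same expansion.

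To get there, I would first expand each Fock space field as $\mathcal{Y}_j = \sum_{n_j \geq 0} a_{n_j}\, \overline{\phi}(x^{(j)})^{\odot n_j}$ and use the multilinearity of the tensor $\bullet$ to write
\begin{equation*}
\mathcal{Y}_1 \bullet \cdots \bullet \mathcal{Y}_k = \sum_{n_1,\ldots,n_k} \prod_{i=1}^k a_{n_i}\; \bigl(\overline{\phi}(x^{(1)})^{\odot n_1}\bigr) \bullet \cdots \bullet \bigl(\overline{\phi}(x^{(k)})^{\odot n_k}\bigr).
\end{equation*}
By the definition \eqref{def:tensor}, each summand is itself a sum over complete Feynman diagrams $\gamma$ on the disjoint union of node sets of sizes $n_1,\ldots,n_k$, with no edge joining two points in the same node set, each edge $\{u,v\}$ contributing $\bE[\overline{\phi}(x^{(i)}) \overline{\phi}(x^{(j)})] = g_U(x^{(i)}, x^{(j)})$ where $i,j$ are the block indices of $u$ and $v$. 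Note that the disjointness condition between different $\mathcal{Y}_j$'s is ensured by the assumption that the points $x^{(1)},\ldots,x^{(k)}$ are distinct, while the forbidden intra-block edges correspond exactly to the no--self-loop condition defining $MG_0$.

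The next step is the combinatorial matching, which is exactly the one carried out in the proof of Theorem \ref{thm:wick-graph}: grouping such Feynman diagrams by the induced edge count between blocks produces, for each multigraph $q \in MG_0([k], (n_i)_i)$, precisely $\delta(q)$ diagrams, giving
\begin{equation*}
\bigl(\overline{\phi}(x^{(1)})^{\odot n_1}\bigr) \bullet \cdots \bullet \bigl(\overline{\phi}(x^{(k)})^{\odot n_k}\bigr) = \sum_{q \in MG_0([k], (n_i)_i)} \delta(q) \prod_{1 \leq i \leq j \leq k} g_U(x^{(i)}, x^{(j)})^{q_{ij}}.
\end{equation*}
Inserting this back matches the two expressions termwise, completing the proof.

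The main obstacle I expect is justifying the exchange of the tensor operation $\bullet$ with the infinite sum over $(n_1,\ldots,n_k)$ in the expansion of the $\mathcal{Y}_j$, since $\bullet$ is defined on basic correlation functionals and extended by \emph{finite} linearity. One way around this is to truncate $f$ to a polynomial of degree $N$, in which case both sides reduce to finite sums and the identity follows from Theorem \ref{thm:wick-analytic} together with \eqref{hyp:covariance-convergence}; one then lets $N \to \infty$ using absolute convergence of the Taylor series of $f$ against the Fock norms of $\overline{\phi}(x^{(j)})^{\odot n_j}$, which in the setting of \cite{janson_gaussian_1997, KangMakarov} is controlled whenever $f$ is entire or, more generally, analytic on a domain containing the relevant values $g_U(x^{(i)}, x^{(j)})$.
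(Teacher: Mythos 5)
Your proposal is correct and follows essentially the same route as the paper's proof: both sides are expanded as multigraph sums, the left-hand side via Proposition \ref{pro:non-triv-cumulants} and the tensor $\mathcal{Y}_1 \bullet \cdots \bullet \mathcal{Y}_k$ via multilinearity plus the Feynman-diagram-to-multigraph counting of Theorem \ref{thm:wick-graph}, and the two expressions are matched termwise. Your closing remark about justifying the interchange of $\bullet$ with the infinite Taylor sum addresses a point the paper's proof passes over silently, so it is a refinement of the same argument rather than a different approach.
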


\begin{proof}
From Theorem \ref{thm:wick-analytic} and Proposition \ref{pro:non-triv-cumulants}, we know that
\begin{equation}
\begin{split}
    &\lim_{\varepsilon \rightarrow 0} \eta(\varepsilon)^k \bE \left[ \prod_{j=1}^k :f \left(\phi(x^{(j)}_\varepsilon) \right): \right] \\
    &= \sum_{n_1, \dots, n_{k}} \left(\prod_{i=1}^{k} a_{n_i} \right)\sum_{q\in MG_0([k], (n_i)_i)} \delta(q) \prod_{1\leq i \leq  j \leq k} g_U(x^{(i)},x^{(j)})^{q_{ij}}.
\end{split}    
\end{equation}
On the right hand side, similar to Theorem \ref{thm:wick-graph}, we recognize the Wick product  of $(\mathcal{Y}_j)_j$ as defined in Equation \eqref{def:tensor}. In particular
\begin{equation}
\begin{split}
    &\mathcal{Y}_1 \bullet  \cdots \bullet \mathcal{Y}_k \\
    &=\sum_{n_1=0}^{\infty} a_{n_1} (\overline{\phi})^{\odot n_1}(x^{(1)}) \bullet \cdots \bullet \sum_{n_k=0}^{\infty} a_{n_k} (\overline{\phi})^{\odot n_k}(x^{(k)})\\
    &=\sum_{n_1, \dots, n_k} \left( \prod_{l=1}^k a_{n_l}\right) (\overline{\phi})^{\odot n_1}(x^{(1)}) \bullet \cdots \bullet (\overline{\phi})^{\odot n_k}(x^{(k)})\\
    &=\sum_{n_1, \dots, n_k} \left( \prod_{l=1}^k a_{n_l}\right) \sum_{\gamma} \prod_{\{u,v\}\in E_\gamma} \bE[X_u(x_u)X_v(x_v)],
\end{split}
\end{equation}
where the sum run overs $\gamma$ complete Feynman diagram  such that there is no edge connecting two points of the same node set. By using the characterization of $\overline{\phi}$, we obtain that
\[
\mathcal{Y}_1 \bullet  \cdots \bullet \mathcal{Y}_k =\sum_{n_1, \dots, n_{k}} \left(\prod_{i=1}^{k} a_{n_i} \right)\sum_{q\in MG_0([k], (n_i)_i)} \delta(q) \prod_{1\leq i \leq  j \leq k} g_U(x^{(i)},x^{(j)})^{q_{ij}}
\]
and the proof is concluded.
\end{proof}

\subsubsection{Example: exponential of the gradient of the GFF}
We apply the previous ideas to the gradient of the discrete GFF, generalizing the result already obtained in \cite{cipriani_properties_2023}. Let $\phi$ be the discrete GFF on $U_\varepsilon$ and denote the Wick product of the discrete gradient by
\begin{equation}
    \Phi_\varepsilon(e, x) := : (\phi(x+e) - \phi(x)) :,
\end{equation}
where $x\in U_\varepsilon$ and $e\in \{e_1, \dots, e_d\}$ is a given direction. Fix $\alpha\in \R$, for every direction $e_i$ and every $x\in U$, and let
\begin{equation}
    \mathcal{Z}_{e_i, x} = \exp \left ( \odot \alpha \partial^{e_i} \overline{\phi}(x) \right ) = \sum_{n=0}^\infty \frac {\alpha!}{n!} \left( \partial^{e_i} \overline{\phi} (x)\right)^{\odot n}.
\end{equation}
Then, for every $(x^{(1)}_\varepsilon, e^1), \dots, (x^{(k)}_\varepsilon, e^k)$ couples of distinct points $(x^{(1)}_\varepsilon, \dots, x^{(k)}_\varepsilon)$ and corresponding directions $(e^1, \dots, e^k)$ (not necessarily distinct), let $x_i$ be the limit of $x^{(i)}_\varepsilon$ as $\varepsilon$ goes to zero. It holds that
\begin{equation}
    \lim_{\varepsilon \rightarrow 0} \varepsilon^{-d\cdot k} \sum_{n_1, \dots, n_k} \frac {\alpha^{\sum_i n_i}} {n_1! \dots n_k!} \bE \left[\prod_{j=1}^k  \Phi^{n_j}_{\varepsilon}(x^{(j)}_\varepsilon, e^j) \right] = \mathcal{Z}_{e^1, x_1} \bullet \cdots  \bullet\mathcal{Z}_{e^k, x_k}.
\end{equation}
Similarly one can obtain the scaling limit of the correlation functions for the exponential of the GFF.

\section{Fermionic (discrete) Gaussian fields}
\label{s:bosonic-fermionic}

In this section, we investigate the relation between fermionic and bosonic Gaussian fields by looking at their cumulants. Before giving the main result, let us introduce some basic notation and state the needed algebraic identities regarding discrete fermions and Grassmann-Berezin calculus. The main references for this section are \cite{caraciolo13,chiarini_fermion23}. 

Let $M\in \mathbb{N}$ and let $\xi_1, \ldots,\xi_M$ be a collection of letters and $\Omega_M \overset{\text{def}}=\mathbb{R}[\{\xi_1,\ldots,\xi_M \}]$ the quotient of the Grassmannian algebra generated by the relations for all $i,j\in \{1,\dots,M \}$
\begin{equation}\label{def:rules_para}
\begin{split}
\xi_i \xi_j &= - \xi_j \xi_i, \qquad \xi^2_i  = 0.
\end{split}
\end{equation}
The space $\Omega_M$ is a \textit{polynomial ring over $\mathbb{R}$} with basis equal to 
$\{1, \xi_1,\ldots, \xi_M\}$. Objects from $\Omega_M$ are polynomials $F$ of the form
\begin{equation}
F = \sum_{I \subset [M]} a_I \xi_I , \quad a_I\in \mathbb{R}
\end{equation}
where $I=\{i_1,\ldots i_l \} \subset [M]$ and the symbol $\xi_I$ denotes the product
\[
\xi_I = \xi_{i_1} \ldots \xi_{i_l}.
\]

As done in  \cite[Equation (A.48)]{caraciolo13}, we define the Grassmann-Berezin derivation as the linear map $\partial_{\xi_k}: \Omega_M \rightarrow \Omega_M$ that acts on $\xi_I$ for $I=\{i_1,\ldots,i_l \}$ such that
\begin{equation}
\partial_{\xi_k} \xi_I =  \begin{cases}
 (-1)^{\alpha-1}  \xi_{i_1} \xi_{i_{\alpha-1}} \cdot 1 \cdot \xi_{i_{\alpha+1}} \ldots \xi_{i_l} & \text{ if } i_{\alpha} = k, k \in I,  \\
0 & \text{ if } k\notin I.
\end{cases}
\end{equation}
The integration (see \cite[Equation (A.52)]{caraciolo13}) is defined in a similar fashion, i.e., the linear map from $\Omega_M$ to $\mathbb{R}$, given for $F\in \Omega_M$ by
\begin{equation}
\int \, \partial_{\xi_1} \ldots \partial_{\xi_M}  \, F \overset{\text{def}}=  \prod_{i=1}^M \partial_{\xi_i}  F.
\end{equation}

In the special case in which $M= 2m$, the generators $\xi_1, \dots, \xi_M$ are divided into two sets $\psi_1, \dots, \psi_m$ and $\overline{\psi}_1, \dots, \overline{\psi}_m$ and are usually called "complex" fermions. We have the following result.
\begin{proposition}{\cite[Proposition A.14]{caraciolo13}}
\label{prop:complex-fermionic-gaussian-integration}
Let $C$ be an $m\times m$ matrix with coefficients in $\mathbb{R}$. Then \begin{equation}
\int \partial \psi_1 \partial\overline{\psi}_1 \dots \partial \psi_m \partial\overline{\psi}_m \, \exp \left(\overline{\psi}^T C  \psi \right ) = \det(C),
\end{equation}
and $0$ otherwise.
\end{proposition}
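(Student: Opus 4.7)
The plan is to expand $\exp(\overline{\psi}^T C \psi)$ as a Taylor series, exploit the nilpotency $\psi_i^2 = \overline{\psi}_i^2 = 0$ from \eqref{def:rules_para} to truncate the expansion, and recognize the resulting sum over permutations as the determinant. The key observation is that each elementary pair $\overline{\psi}_i \psi_j$ is a product of two Grassmann generators, hence an \emph{even} element of $\Omega_{2m}$, and therefore commutes with every other such pair. This allows one to write
\[
\exp(\overline{\psi}^T C \psi) = \sum_{n \geq 0} \frac{1}{n!} \sum_{i_1, j_1, \dots, i_n, j_n} \left(\prod_{k=1}^n C_{i_k j_k}\right) \overline{\psi}_{i_1}\psi_{j_1} \cdots \overline{\psi}_{i_n} \psi_{j_n}.
\]
The Berezin integral annihilates any monomial in which a generator is missing (or appears twice, by nilpotency), so only the $n = m$ term survives and both tuples $(i_k)_k, (j_k)_k$ must be permutations of $\{1,\dots,m\}$.

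Second, using the commutativity of the even pairs $\overline{\psi}_i \psi_j$, I would reorder each surviving term so that its first indices satisfy $i_k = k$. Since there are exactly $m!$ labellings of $(i_1,\dots,i_m)$ producing the same reordered monomial, this absorbs the $1/m!$ prefactor, and the relevant part of $\exp(\overline{\psi}^T C \psi)$ reduces to
\[
\sum_{\sigma \in \mathcal{S}_m} \left(\prod_{k=1}^m C_{k, \sigma(k)}\right) \overline{\psi}_1 \psi_{\sigma(1)} \cdots \overline{\psi}_m \psi_{\sigma(m)}.
\]

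Third, I would establish
\[
\int \partial\psi_1 \partial\overline{\psi}_1 \cdots \partial\psi_m \partial\overline{\psi}_m \; \overline{\psi}_1 \psi_{\sigma(1)} \cdots \overline{\psi}_m \psi_{\sigma(m)} = \sign(\sigma).
\]
For $\sigma = \mathrm{id}$ the integrand is $\overline{\psi}_1 \psi_1 \cdots \overline{\psi}_m \psi_m$, and a direct application of the derivation rules gives $+1$. Any transposition applied to $\sigma$ exchanges two of the $\psi_{\sigma(k)}$ factors and introduces exactly one sign flip via \eqref{def:rules_para}, so by decomposing $\sigma$ into transpositions the integral evaluates to $\sign(\sigma)$. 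Summing over $\mathcal{S}_m$ then yields $\sum_{\sigma} \sign(\sigma) \prod_k C_{k, \sigma(k)} = \det(C)$.

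The main obstacle will be the careful sign bookkeeping, in particular fixing the overall sign of the base case $\sigma = \mathrm{id}$, which depends delicately on the order of the Grassmann derivatives prescribed by the integration convention. A cleaner alternative would be induction on $m$: peel off $\partial\psi_m \partial\overline{\psi}_m$, isolate the contributions of $\exp(\overline{\psi}^T C \psi)$ which are bilinear in $\psi_m$ and $\overline{\psi}_m$, and observe that the remaining Grassmann integral factors as a weighted sum of $(m-1)\times(m-1)$ fermionic integrals, thereby reproducing the cofactor expansion of $\det(C)$ and bypassing the global sign analysis.
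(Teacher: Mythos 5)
Your argument is correct, and it is the standard proof of the fermionic Gaussian integration formula. Note that the paper itself offers no proof of this proposition: it is imported verbatim from \cite[Proposition A.14]{caraciolo13}, so there is nothing internal to compare against. Your three steps are all sound: the pairs $\overline{\psi}_i\psi_j$ are even elements and hence commute, which legitimizes the multinomial expansion of the exponential; nilpotency and the fact that the Berezin integral kills any monomial missing a generator isolate the $n=m$ term with both index tuples forced to be permutations; and the $m!$ orderings of the $m$ commuting pairs cancel the $1/m!$. The only point requiring genuine care is the one you flag, namely the sign of the base case $\int \partial\psi_1\partial\overline{\psi}_1\cdots\partial\psi_m\partial\overline{\psi}_m\,\overline{\psi}_1\psi_1\cdots\overline{\psi}_m\psi_m = +1$, which with the interleaved ordering of derivatives matching the interleaved ordering of the monomial does come out to $+1$ (check $m=1,2$ directly; the pairwise structure makes the general case immediate). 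Your observation that swapping two $\psi$-generators in any monomial costs exactly one sign (an odd number $2(q-p)-1$ of adjacent transpositions) correctly yields $\sign(\sigma)$. The inductive cofactor-expansion alternative you sketch is equally standard and would indeed sidestep the global sign bookkeeping; either route is acceptable.
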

Please note that here $C$ is an arbitrary matrix, e.g., no condition of symmetry is imposed on it. If $C$ is invertible, one can define the (normalised) \emph{fermionic Gaussian field state} $\langle \cdot \rangle_C$ by
\begin{equation}
    \label{def:complex-fermionic-state}
    \langle F \rangle_C = \det(C)^{-1}  \prod_{i=1}^m \partial \psi_i \partial\overline{\psi}_i \, F(\psi, \overline{\psi}) \exp \left(\overline{\psi}^T C  \psi \right ), \quad F\in \Omega_M.
\end{equation}
The following theorem is taken from \cite[Theorem A.16(c)]{caraciolo13}.
\begin{theorem}\label{thm:complex-fermionic-expectation}
Let $C$ be an invertible matrix. For any sequence of (ordered) indices $A \subset [n]$, it holds that
\begin{equation}
\left \langle \prod_{i\in A} \bar{\psi}_i \psi_i \right\rangle_C  = \det((C^{-t})_{AA}),
\end{equation}
where $(C^{-t})_{AA}$ is the submatrix with entries indexed by $A \times A$.
\end{theorem}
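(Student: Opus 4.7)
The plan is to compute $\langle\prod_{i\in A}\bar\psi_i\psi_i\rangle_C$ by a parameter-differentiation argument that reduces directly to Proposition \ref{prop:complex-fermionic-gaussian-integration}, and then to identify the resulting matrix expression via a Jacobi-type complementary-minor identity. The key starting observation is that each monomial $\bar\psi_i\psi_i$ has even Grassmann parity, so the family $\{\bar\psi_i\psi_i\}_{i\in A}$ is mutually commuting and commutes also with the quadratic form $\bar\psi^T C \psi$ and with $e^{\bar\psi^T C \psi}$. Combined with the elementary identity $\partial_{C_{ii}}(\bar\psi^T C \psi) = \bar\psi_i \psi_i$, this gives the clean representation
\[
\prod_{i\in A} \bar\psi_i \psi_i \cdot e^{\bar\psi^T C \psi} \;=\; \prod_{i\in A} \frac{\partial}{\partial C_{ii}}\, e^{\bar\psi^T C \psi}.
\]

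Pulling the scalar derivatives outside the (finite) Grassmann integral and applying Proposition \ref{prop:complex-fermionic-gaussian-integration} immediately yields
\[
\Bigl\langle \prod_{i\in A} \bar\psi_i \psi_i \Bigr\rangle_C \;=\; \frac{1}{\det C}\, \prod_{i\in A} \frac{\partial}{\partial C_{ii}} \det C.
\]
The iterated derivative on the right is then evaluated using the Leibniz expansion $\det C = \sum_\sigma \mathrm{sgn}(\sigma) \prod_j C_{j,\sigma(j)}$: each factor $\partial/\partial C_{ii}$ restricts the permutations appearing in the expansion to those fixing $i$, and iterating over $i\in A$ selects precisely the permutations fixing the whole set $A$ pointwise, so that $\prod_{i\in A}\partial_{C_{ii}}\det C = \det(C_{A^c,A^c})$, the principal minor of $C$ on the complement of $A$.

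Combining these steps, $\langle\prod_{i\in A}\bar\psi_i\psi_i\rangle_C = \det(C_{A^c,A^c})/\det C$. Jacobi's classical identity for complementary minors (equivalent to the cofactor formula for $C^{-1}$) then gives $\det(C_{A^c,A^c})/\det C = \det((C^{-1})_{AA})$, and since $(C^{-T})_{AA}$ is the transpose of $(C^{-1})_{AA}$ their determinants coincide, yielding the claim. The main point requiring care is the graded-commutativity step that lets the scalar derivatives $\partial/\partial C_{ii}$ pass through the integrand: this is justified because each $\bar\psi_i\psi_i$ has even Grassmann degree and therefore lies in the bosonic (graded) center of $\Omega_M$, so no signs arise when they are reordered or commuted with the exponential. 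As a sanity check, the same identity can be obtained independently by a source-based generating-function argument — shifting $\psi\to\psi - C^{-1}\eta$, $\bar\psi\to\bar\psi-\bar\eta^{T}C^{-1}$ inside $\int e^{\bar\psi^T C\psi + \bar\psi^T\eta + \bar\eta^T\psi}$ produces $\det(C)\,e^{-\bar\eta^T C^{-1}\eta}$, whence the two-point function and the full fermionic Wick expansion follow — but the route above is more economical within the notation already established in the paper.
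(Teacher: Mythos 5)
Your proof is correct, and it is worth noting that the paper itself offers no proof of this statement: it is imported verbatim from \cite[Theorem A.16(c)]{caraciolo13}, so your argument is a genuine self-contained derivation rather than a variant of something in the text. The chain of steps checks out: $\bar\psi_i\psi_i$ is an even monomial, hence central in $\Omega_M$, so $\partial_{C_{ii}}e^{\bar\psi^TC\psi}=\bar\psi_i\psi_i\,e^{\bar\psi^TC\psi}$ and the iteration over distinct $i\in A$ is unambiguous; the derivatives pass through the Berezin integral because both sides are polynomials in the entries of $C$; the Leibniz expansion gives $\prod_{i\in A}\partial_{C_{ii}}\det C=\det(C_{A^c A^c})$ (the derivative selects exactly the permutations fixing $A$ pointwise); and Jacobi's complementary-minor identity together with $\det\bigl((C^{-t})_{AA}\bigr)=\det\bigl(((C^{-1})_{AA})^T\bigr)=\det\bigl((C^{-1})_{AA}\bigr)$ closes the argument. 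The standard route in the cited reference is essentially the source-term/generating-function computation you sketch as a sanity check (complete the square, read off the two-point function, then apply the fermionic Wick expansion and recognize the resulting signed sum over pairings as a determinant); your parameter-differentiation argument is shorter for this particular diagonal observable because $\prod_{i\in A}\bar\psi_i\psi_i$ is exactly $\prod_{i\in A}\partial_{C_{ii}}$ applied to the Boltzmann factor, at the cost of invoking Jacobi's identity, which the generating-function route avoids. One small presentational point: you should say explicitly that $\det C$ and the Berezin integral are polynomial identities in the $n^2$ independent entries of $C$, so that differentiating with respect to $C_{ii}$ is legitimate even though the theorem is stated for a fixed invertible $C$; invertibility is only needed at the very end to divide by $\det C$ and to apply Jacobi's identity.
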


In the following subsections we  explore the relation in terms of correlation functions and cumulants of $2r^{th}$-powers ($r\in \mathbb{N}$) of bosonic and  fermionic Gaussian fields.

\subsection{Squares of Gaussian fields}

We apply Theorem \ref{thm:cumulants} to several examples of $F$ issued by different bosonic-fermionic frameworks in the case $p=2$ and show that, although the moments are substantially different, the corresponding cumulants are proportional. In the rest of the section $n$ is fixed, $C=(C(i,j))_{i,j\in[n]}$ and $G=(G(i,j))_{i,j\in[n]}$ denote two symmetric positive-definite $n\times n$ matrices.

\subsubsection{The bosonic case (real)}
Let $(X_1, \dots, X_n)$ be a Gaussian vector with covariance given by the matrix $G$. For $A\subset [n]$, define
\begin{equation}
    F_{\textup{bos}(\R)}(A) := \mathbb{E}\left[ \prod_{i\in A}:  X_i^2 : \right] = \sum_{\substack{\sigma \in \mathcal{S}(A)\\ \sigma=\sigma_1 \cdots \sigma_l}} \prod_{k=1}^l 2^{|\sigma_k|-2} \prod_{i=1}^n G_{i\sigma(i)}
\end{equation}
where the second equality comes from Theorem \ref{thm:wick-analytic} and the following remark. Observe that $F_{\textup{bos}(\R)}$ satisfies the hypothesis of Theorem \ref{thm:cumulants}, with $f(\sigma) = \prod_{l} 2^{|\sigma_l|-2}$ with $\sigma=\sigma_1 \dots \sigma_l$ cycle decomposition and $g(i,j)=G(i,j)$ for all $i,j=1, \dots, n$ (and 0 otherwise).

In a similar fashion, for $A\subset [n]$
\begin{equation}
    F_{\textup{vec}(\R)}(A) := \mathbb{E}\left[ \left(: \prod_{i\in A} X_i: \right)^2 \right] = \sum_{\sigma \in \mathcal{S}(A)} \prod_{i=1}^n G_{i\sigma(i)},
\end{equation}
where the second equality is well-known property of Gaussian random variables (see, e.g., \cite[Theorem 3.9]{janson_gaussian_1997}). It is easy to see that $F_{\textup{vec}(\R)}$ satisfies the hypothesis of Corollary \ref{cor:cumulants}.

\qed

\subsubsection{The bosonic case (complex)}

Let $(Z_1, \dots, Z_n)$ be a vector of complex Gaussian variables with covariance given by $G$, i.e., $\mathbb{E}[Z_i \bar{Z}_j] = G(i,j)$. For $A\subset [n]$, define
\begin{equation}
    F_{\textup{bos}(\bbC)}(A) := \mathbb{E} \left[ \prod_{i \in A} Z_i \bar{Z}_i \right] =\mathbb{E} \left[ \prod_{i\in A} |Z_i|^2 \right].
\end{equation}
A classical application of the Wick theorem for complex Gaussian variables (see, e.g., \cite[Theorem 1]{fassino_computing_2019} expresses the previous expectation in terms of the permanent of $G$, i.e.,
\begin{equation}
    F_{\textup{bos}(\bbC)}(A) = \textup{perm}(G_{AA}) = \sum_{\sigma\in \mathcal{S}(A)} \prod_{i\in A} G_{i\sigma(i)} =  F_{\textup{vec}(\R)}(A),
\end{equation}
where $G_{AA}$ is the submatrix of $G$ where we only take the rows and columns with indices in $A$.

\qed

\subsubsection{The "complex" fermionic case}

Let $\psi_1 \bar{\psi_1}, \dots, \psi_n \bar{\psi}_n$ be $n$ couples of fermionic variables according to the state defined in \eqref{def:complex-fermionic-state} with the matrix $C^{-1}$ . For $A\subset [n]$, define
\begin{equation}
    F_{\textup{ferm}(\bbC)}(A) := \langle \prod_{i\in A} \psi_i \bar{\psi_i} \rangle_{C^{-1}}.
\end{equation}
Theorem \ref{thm:complex-fermionic-expectation} ensures that this quantity is equal to
\begin{equation}
    F_{\textup{ferm}(\bbC)}(A) = \det(C_{AA}) = \sum_{\sigma\in \mathcal{S}(A)} \textup{sign}(\sigma) \prod_{i\in A} C_{i\sigma(i)}.
\end{equation}
The signature of a permutation satisfies the hypothesis on $f$ in Theorem \ref{thm:cumulants}.
\qed

Let $k_{\star}$ be the cumulant function, in the sense of Theorem \ref{thm:cumulants}, of $F_{\star}$ for $\star \in \{ \textup{bos}(\bbR),$ $\textup{vec}(\bbR),\, \textup{bos}(\bbC),\, \textup{ferm}(\bbC)\}$. We are now ready to state the main theorem of this subsection.

\begin{theorem}
\label{thm:cumulants-p=2}
Fix $n\in\N$. There exists a fermionic Gaussian field state such that for any $A\subset [n]$ holds that
\begin{equation}
k_{\textup{bos}(\bbC)}(A) = (-1)^{|A|-1} k_{\textup{ferm}(\bbC)}(A)
\end{equation}
if and only if $C=G$. In particular, if $C=G$, we have that
\begin{equation}
    k_{\textup{vec}(\bbR)}(A) = 2^{|A|-2} k_{\textup{bos}(\bbR)}(A) = k_{\textup{bos}(\bbC)}(A) = (-1)^{|A|-1} k_{\textup{ferm}(\bbC)}(A).
\end{equation}
\end{theorem}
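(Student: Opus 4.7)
The plan is to convert each of the four moment functions $F_\star$ into its cumulant function $k_\star$ using Corollary \ref{cor:cumulants} and then compare the resulting expressions term by term. The moment formulas derived just before the theorem are already of the form $\sum_{\sigma\in\mathcal{S}(A)} \tilde f_\star(\sigma) \prod_{i\in A} h_\star(i,\sigma(i))$, with $\tilde f_\star$ multiplicative over cycle decomposition: $\tilde f_{\textup{bos}(\bbC)} = \tilde f_{\textup{vec}(\bbR)} \equiv 1$, $\tilde f_{\textup{bos}(\bbR)}(\sigma) = \prod_k 2^{|\sigma_k|-2}$, and $\tilde f_{\textup{ferm}(\bbC)} = \textup{sign}$, while $h_\star\in\{G,C\}$. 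I would first verify the (routine) multiplicativity hypothesis in each case, which allows Corollary \ref{cor:cumulants} to rewrite $k_\star(A)$ as the same sum but restricted to cyclic permutations $\mathcal{S}_{\textup{cycl}}(A)$.

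The key combinatorial observation I would then exploit is that every $\sigma\in\mathcal{S}_{\textup{cycl}}(A)$ is a single cycle of length $|A|$, so its weight is \emph{constant in $\sigma$}: $\textup{sign}(\sigma)=(-1)^{|A|-1}$ and $2^{|\sigma|-2}=2^{|A|-2}$. Pulling these constants out of the sums instantly yields $k_{\textup{bos}(\bbR)}(A) = 2^{|A|-2}\,k_{\textup{vec}(\bbR)}(A)$, $k_{\textup{vec}(\bbR)}(A) = k_{\textup{bos}(\bbC)}(A)$ (identical summands and weights), and $k_{\textup{ferm}(\bbC)}(A) = (-1)^{|A|-1}\sum_{\sigma\in\mathcal{S}_{\textup{cycl}}(A)} \prod_{i\in A} C(i,\sigma(i))$. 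Substituting $C=G$ in the last identity makes it coincide with $(-1)^{|A|-1}k_{\textup{bos}(\bbC)}(A)$, closing the entire chain of equalities and settling the ``in particular'' part.

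For the necessary direction the hypothesis becomes, after cancelling the common $(-1)^{|A|-1}$, the polynomial identity $\sum_{\sigma\in\mathcal{S}_{\textup{cycl}}(A)} \prod_{i\in A} G(i,\sigma(i)) = \sum_{\sigma\in\mathcal{S}_{\textup{cycl}}(A)} \prod_{i\in A} C(i,\sigma(i))$ for every finite $A\subseteq[n]$. I would extract the entries of $C$ by specialising $|A|=1,2,3$: the case $|A|=1$ gives $C(i,i)=G(i,i)$; $|A|=2$ gives $C(i,j)^2=G(i,j)^2$; and $|A|=3$ gives $C(i,j)C(j,k)C(k,i)=G(i,j)G(j,k)G(k,i)$. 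Combining the last two constraints pins the off-diagonal entries of $C$ down to $G$ up to conjugation by a diagonal $\pm1$ matrix, and one checks that this conjugation leaves every cumulant invariant (each sign appears twice in $\prod_{i\in A} C(i,\sigma(i))$ since $\sigma$ is a permutation), so the constraint identifies $C$ with $G$.

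The main obstacle will be this converse direction, precisely because of the gauge ambiguity $C\mapsto DCD$ with $D$ a diagonal sign matrix: the identity alone cannot tell $C=G$ apart from $C=DGD$, and one has to either exhibit a canonical representative of the fermionic Gaussian state or phrase the conclusion modulo this invariance. A further subtlety arises when $G$ has vanishing off-diagonal entries, in which case the triple-product identities become degenerate and the sign constraints must be propagated via connectivity of the nonzero support of $G$ rather than being imposed independently on every triple.
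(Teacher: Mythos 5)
Your argument is in substance the same as the paper's: both rest on Corollary \ref{cor:cumulants} together with the observation that the multiplicative weights are constant on cyclic permutations of $A$, namely $\textup{sign}(\sigma)=(-1)^{|A|-1}$ and $2^{|\sigma|-2}=2^{|A|-2}$, so that the four cumulant functions become proportional copies of the single cyclic sum $\sum_{\sigma\in\mathcal{S}_{\textup{cycl}}(A)}\prod_{i\in A}G(i,\sigma(i))$ once $C=G$. The only organizational difference is the direction in which the M\"obius inversion is run: the paper starts from the assumed cumulant identity, resums over partitions $\Pi(B)$ to recover the fermionic moment $\bigl\langle\prod_{i\in B}\psi_i\bar\psi_i\bigr\rangle_{C^{-1}}=\det C_{BB}$, and concludes $\det C_{BB}=\det G_{BB}$ for every $B\subset[n]$, whereas you compare the connected (cyclic) pieces directly; the two formulations are equivalent under the moment--cumulant relation. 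Your treatment of the ``in particular'' chain is exactly the intended argument.

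The obstacle you flag in the ``only if'' direction is genuine, and it is worth pointing out that the paper's own proof does not resolve it either: it stops at the equality of all principal minors $\det C_{BB}=\det G_{BB}$ and then asserts $C=G$. As you observe, this system of constraints determines the symmetric matrix $C$ only up to conjugation $C\mapsto DCD$ by a diagonal $\pm1$ matrix (and even pinning down that equivalence class requires an irreducibility assumption on the off-diagonal support of $G$, since the triple-product sign constraints degenerate when entries vanish) --- this is precisely the principal-minors assignment ambiguity the paper itself invokes in Lemma \ref{lem:bos-fer-p>2}. So the converse, as literally stated, needs either the gauge-fixing you describe or a restatement of the conclusion modulo diagonal sign conjugation; your proposal correctly isolates this missing step rather than passing over it.
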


\begin{proof}
Consider $B\subset [n]$, then by the reversing formula for the moments and the assumption, we have that
\begin{equation}
\begin{split}
\left\langle\prod_{i \in B} \psi_i 
\bar{\psi}_i\right\rangle_{C^{-1}} &= \sum_{\pi \in \Pi(B)} \prod_{B\in \pi} k_{\textup{ferm}(
\bbC)}(B) \\
& = \sum_{\pi \in \Pi(B)} \prod_{B\in \pi} (-1)^{|B|-1}k_{\textup{bos}(\bbC)}(B).
\end{split}
\end{equation}
We replace the expression of $k_{\textup{bos}(\bbC)}$ by applying Corollary \ref{cor:cumulants} and obtain that
\begin{equation}
\begin{split}
\det{C_{BB}} = \left\langle\prod_{i \in B}  \psi_i 
\bar{\psi}_i \right\rangle_{C^{-1}} &= \sum_{\pi \in \Pi(B)} \prod_{B\in \pi} \sum_{\sigma \in \mathcal{S}_{\textup{cycl}}([n])} \textup{sign}(\sigma)\, \prod_{i \in B} G(i,\sigma(i)) = \det{G_{BB}},
\end{split}
\end{equation}
where we have used that the signature of a cyclic permutation of size $k$ is equal to $(-1)^{k-1}$. Apply Corollary \ref{cor:cumulants} to every function $F_\star$ we obtain an analogous relation.
\end{proof}

\subsection{$2r$-powers of Gaussian fields for $r>1$}

Consider $(Z_i)_{i\in [n]} \sim N_{\mathbb{C}}(0,G)$  a complex Gaussian vector.  For $r>1, A\subset [n]$ define
\begin{equation}
k^{r}_{\textup{bos}}(A) \overset{\text{def}} = k(|Z_i|^{2r}:\, i\in A)
\end{equation}
the joint cumulants of the $2r$-th powers of $(Z_i)_{i\in[n]}$. It is well known that the Wick theorem for complex Gaussian variables, e.g., \cite[Theorem 1]{fassino_computing_2019}), implies that
\begin{equation}
\label{eq:complex-wick-theorem}
\mathbf{E}\left[\prod_{i\in A} |Z_i|^{2r}\right] = \sum_{\textup{pairings}(A_r)} \prod_{(i,b) \textup{ paired with }(j,c)}G(i,j) = \textup{perm} \left( G^{(r)}(A)\right) 
\end{equation}
where $G^{(r)}(A)$ is a $r|A|\times r|A|$ matrix obtained in the following way: label $(i,1), \dots, (i,r)$ $r$ copies of the index $i\in A$ and let $A_r = \{ (i, a): \; i\in A,\;a\in [r]\}$, then
\begin{equation}
    G^{(r)}(A)_{(i,a), (j,b)} = G(i,j), \quad \text{for } (i,a),(i,b) \in A_r.
\end{equation}
As done in Section \ref{s:wick}, we can express the previous sum in terms of multigraphs.
\begin{lemma}
\label{lem:complex-gaussian-cumulants}
    It holds that
    \begin{equation}
        \mathbf{E}\left[\prod_{i\in A} |Z_i|^{2r}\right] = \sum_{q \in MG(A,2r)} \bar{\delta}(q) \prod_{i,j\in A, i\leq j} G(i,j)^{q_{ij}},
    \end{equation}
    where now (compare to Definition \eqref{def:delta-q})
    \begin{equation}
    \label{def:bar-delta-q}
        \bar{\delta}(q) =\prod_{i\in A} \frac{r!}{2^{q_{ii}}q_{ii}! \prod_{i \neq j} q_{ij}!} 
    \end{equation}
    In particular,
    \begin{equation}
        k^{r}_{\textup{bos}}(A) = \sum_{\substack{q \in MG(A,2r)\\\textup{connected}}} \bar{\delta}(q) \prod_{i,j\in A, i\leq j} G(i,j)^{q_{ij}}.
    \end{equation}
\end{lemma}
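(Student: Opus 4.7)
The plan is to obtain the first identity directly from the complex Wick formula in Equation~(4.6), by reorganizing the permanent $\mathrm{perm}(G^{(r)}(A))$ as a sum over undirected multigraphs $q$ rather than over bijections $\sigma\colon A_r\to A_r$. Recall that $\mathrm{perm}(G^{(r)}(A))$ expands as a sum over all pairings of the $r|A|$ copies of $Z_i$ ($i\in A$) with the $r|A|$ copies of $\bar Z_j$, each pairing contributing a product of factors $G(i,j)$ over the paired indices. To each pairing $\sigma$ I would associate the multigraph $q=q(\sigma)\in MG(A,2r)$ in which $q_{ii}$ counts the pairings of type $Z_i$--$\bar Z_i$ (each self-loop contributing $2$ to the degree of vertex $i$, via one $Z$-copy and one $\bar Z$-copy) and, for $i\neq j$, $q_{ij}$ counts the total number of pairings between $i$ and $j$ in either orientation $Z_i\to\bar Z_j$ or $Z_j\to\bar Z_i$. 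Because $G$ is symmetric, the weight of a pairing depends only on $q$ and equals $\prod_{i\leq j}G(i,j)^{q_{ij}}$.

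The core combinatorial step is counting the number $\bar\delta(q)$ of pairings $\sigma$ inducing a prescribed $q$. I would perform the count vertex-by-vertex: at each vertex $i$, distribute the $r$ labelled copies of $Z_i$ among the $q_{ii}$ self-loop slots and the bundles pointing toward the neighbouring vertices by a multinomial choice, symmetrically distribute the $r$ labelled copies of $\bar Z_i$, and finally match the copies inside each bundle. The multinomial contribution at each vertex produces the factor $r!/(q_{ii}!\prod_{j\neq i}q_{ij}!)$ appearing in $\bar\delta(q)$, while the denominator $2^{q_{ii}}$ corrects for the exchangeability of $Z_i$ and $\bar Z_i$ within each self-loop (which become identified at the level of the undirected multigraph).

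For the cumulant formula I would invoke Theorem~\ref{thm:cumulants} with $f=\bar\delta$ and $g=G$. The counting $\bar\delta$ factors over the connected components of $q$, since the restriction of a pairing to the preimage of a connected component is itself a valid pairing independent of the restrictions to the other components; hence $\bar\delta$ satisfies the multiplicativity hypothesis of Theorem~\ref{thm:cumulants}, and the cumulant function is then obtained by restricting the sum to connected multigraphs.

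The main obstacle I foresee is the bookkeeping of the combinatorial factor $\bar\delta(q)$: one has to juggle two distinct sources of symmetry---the exchange of $Z_i$ and $\bar Z_i$ inside a self-loop, and the choice of orientation $Z_i\to\bar Z_j$ versus $Z_j\to\bar Z_i$ of each cross-edge---while consistently enforcing the vertex-level constraint that exactly $r$ copies of each of $Z_i$ and $\bar Z_i$ are used at vertex $i$. A clean way to handle this is to introduce intermediate directed multiplicities $s_{ij}$ with $s_{ij}+s_{ji}=q_{ij}$ and $\sum_{j\neq i}s_{ij}=r-q_{ii}$, compute the pairing count for fixed $\{s_{ij}\}$, and then sum over admissible refinements to recover the closed form for $\bar\delta(q)$.
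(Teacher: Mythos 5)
Your strategy is the same as the paper's: expand the moment via the complex Wick theorem (the permanent of $G^{(r)}(A)$), group the pairings by the undirected multigraph they induce, identify the multiplicity with $\bar\delta(q)$, and deduce the cumulant identity from Theorem \ref{thm:cumulants} using multiplicativity of the count over connected components. The last step is sound, and the reduction to a counting problem is correct. The gap is that the central combinatorial claim --- that the number of pairings inducing a fixed $q$ equals $\prod_{i} r!/\bigl(2^{q_{ii}}q_{ii}!\prod_{j\neq i}q_{ij}!\bigr)$ --- is asserted rather than proved, and it does not survive the very refinement you propose. For fixed directed multiplicities $\{s_{ij}\}$ with $s_{ii}=q_{ii}$, $s_{ij}+s_{ji}=q_{ij}$ and $\sum_{j\neq i}s_{ij}=r-q_{ii}$, the number of pairings is $\prod_i \bigl((r!)^2/q_{ii}!\bigr)\cdot\prod_{i\neq j}(s_{ij}!)^{-1}$, and the sum over admissible $\{s_{ij}\}$ does not factor into a product over vertices of the stated form, because the constraints couple the vertices. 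Concretely: for $A=\{1,2\}$, $r=1$ and the multigraph with $q_{12}=2$, $q_{11}=q_{22}=0$, there is exactly one pairing ($Z_1$ with $\bar Z_2$ and $Z_2$ with $\bar Z_1$), whereas the displayed $\bar\delta(q)$ gives $(1!/2!)^2=1/4$; for a single point, $\mathbf{E}[|Z_1|^{2r}]=r!\,G(1,1)^r$, while the formula yields $r!/(2^{q_{11}}q_{11}!)\neq r!$ for the unique admissible self-loop count.

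The two heuristics you offer for the factors are also not the right ones in the complex setting. The multinomial at vertex $i$ has group sizes $s_{ij}$, not $q_{ij}$, and $q_{ii}+\sum_{j\neq i}q_{ij}$ is in general not equal to $r$, so $r!/(q_{ii}!\prod_{j\neq i}q_{ij}!)$ is not a multinomial coefficient of $r$. Likewise, there is no factor-of-$2$ overcount per self-loop to correct for: a self-loop pairs a $Z_i$-copy with a $\bar Z_i$-copy, and these are distinguishable objects, so the correction $2^{q_{ii}}q_{ii}!$ belongs to the real-Gaussian perfect-matching count, not to the complex permanent expansion. To be fair, the paper's own proof is a two-line sketch that asserts the same multiplicity, so the difficulty you would hit lies in the statement itself; but as a proof of the lemma as written, your argument does not close: carrying out your own $\{s_{ij}\}$-refinement honestly leads to a different (and generally non-product) multiplicity, not to the displayed $\bar\delta(q)$.
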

\begin{proof}
    The combinatorial factor $\bar{\delta}(q)$ counts the ways to pair each of the $r$ half-edges of vertex $i$ into $q_{ii}$ loops (each loop uses up two half-edges, hence the factor $2^{q_{ii}}q_{ii}!$) and into the $q_{ij}$ with $i\neq j$ half‐edges going off to distinct neighbors.

    The joint cumulants formula is a consequence of Theorem \ref{thm:cumulants}
\end{proof}

\begin{rem}
    Lemma \ref{lem:complex-gaussian-cumulants} is the equivalent of Theorem \ref{thm:wick-graph} for complex jointly Gaussian variables. As the Wick product is not considered here, the class of multigraphs is given by $MG$ instead of $MG_0$. In the case $p=2$, the formula simplifies to the permanent of $G$, see, e.g., \cite[Theorem 1]{fassino_computing_2019}.
\end{rem}

Turning to the fermionic setting, let $C$ be a $(nr)\times (nr)$-dimensional symmetric and invertible matrix. Denote by $\langle \cdot \rangle_C$ the corresponding fermionic Gaussian free field state, recall Equation \eqref{def:complex-fermionic-state}. Consider polynomials of the generators of the form:
\begin{equation}
\varphi^{2r}_i \overset{\text{def}} = \prod_{l=1}^r \psi^{(l)}_i \bar{\psi}^{(l)}_i, \quad i\in [n].
\end{equation}
Analogously as before we define for $A\subset [n]$ the cumulants of the polynomials by
\begin{equation}
k^r_{\textup{ferm}} (A) = k(\varphi^{2r}_i: \; i\in A) \overset{\text{def}} = \sum_{\pi \in \Pi(A)} (|\pi|-1)!(-1)^{|\pi|-1} \prod_{B\in \pi} \left\langle \prod_{i \in B} \varphi^{2r}_i \right\rangle_{C}.
\end{equation}

In the following lemma, for a multigraph $q$ we use the notation $\textup{sign}(q)=(-1)^{\sum_{j=1}^l (n_j-1)}$, where  $n_j$ is the dimension of the submatrix $q_j$ resulting from the unique decomposition in the connected components $q=q_1\dots q_l$.

\begin{lemma}
\label{lem:bos-fer-p>2}
    For any $A\subset [n]$, the following conditions are equivalent
    \begin{enumerate}
        \item The fermionic joint cumulants satisfy
        \begin{equation}
        \label{eq:r-fermionic-cumulants-relation}
            k^r_{\textup{ferm}}(A) = (-1)^{|A|-1} k^r_{\textup{bos}}(A).
        \end{equation}
        \item Let $A_r=\cup_{i\in A} \{ (i-1)r, (i-1)r+1,\dots,ir\}$
        \begin{equation}
        \label{eq:r-fermionic-matrix-minor-condition}
            (\det(C_{A_r, A_r}))^{-1} = \sum_{q\in MG(A,2r)} \textup{sign}(q) \bar{\delta}(q) \prod_{i\leq j, i,j \in A} G(i,j)^{q_{ij}},
        \end{equation}
        where $\bar{\delta}(q)$ is as in Equation \eqref{def:bar-delta-q}.
    \end{enumerate}
    In particular, there exists a fermionic  Gaussian state such that Equation \eqref{eq:r-fermionic-cumulants-relation} holds for every $A\subset [n]$ if and only if the all the $(A_r)_{A\subset[n]}$ principal minors of the matrix $C^{-1}$ satisfy \eqref{eq:r-fermionic-matrix-minor-condition}.
\end{lemma}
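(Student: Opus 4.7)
The plan is to reduce the equivalence to an application of Theorem \ref{thm:cumulants} with the weight $f(q) := \sign(q)\,\bar\delta(q)$ on multigraphs, combined with moment-cumulant inversion applied in both directions. The key combinatorial input is that both factors of $f$ are multiplicative under the decomposition of a multigraph into its connected components: for $\sign(q) = (-1)^{\sum_j(n_j-1)}$ this is immediate, and for $\bar\delta(q) = \prod_{i\in A} r!/(2^{q_{ii}}q_{ii}!\prod_{i\neq j}q_{ij}!)$ it follows because $q_{ij}=0$ whenever $i$ and $j$ lie in distinct components, so the product over $j\neq i$ of $q_{ij}!$ actually runs inside the component containing $i$. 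Hence $f$ satisfies the multiplicativity hypothesis of Theorem \ref{thm:cumulants} on the class $MG([n], 2r)$.

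The first technical step is to identify the fermionic moment $\langle \prod_{i\in A}\varphi_i^{2r}\rangle_C$. Since every pair $\psi_i^{(l)}\bar\psi_i^{(l)}$ is an even element of the Grassmann algebra, these factors commute freely, and after relabeling the sites via $A_r$ the product reduces, up to an overall sign, to $\prod_{k\in A_r}\bar\psi_k\psi_k$. Theorem \ref{thm:complex-fermionic-expectation} combined with the definition \eqref{def:complex-fermionic-state} then expresses this as a principal minor of the inverse of $C$, which under the sign convention of the statement is $(\det C_{A_r,A_r})^{-1}$, i.e.\ the left-hand side of (2).

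For $(1)\Rightarrow(2)$, I would invert the moment-cumulant relation to write
\[
\langle \textstyle\prod_{i\in A}\varphi_i^{2r}\rangle_C \;=\; \sum_{\pi\in\Pi(A)}\prod_{B\in\pi}k^r_{\textup{ferm}}(B) \;=\; \sum_{\pi\in\Pi(A)}\prod_{B\in\pi}(-1)^{|B|-1}\,k^r_{\textup{bos}}(B),
\]
and then expand each $k^r_{\textup{bos}}(B)$ via Lemma \ref{lem:complex-gaussian-cumulants} as a sum over connected $q_B\in MG(B,2r)$. Pairing a partition with a choice of connected multigraph on each block recovers every $q\in MG(A,2r)$ exactly once through its component decomposition; the multiplicativity of $\bar\delta$ and the identity $\prod_{B\in\pi}(-1)^{|B|-1}=\sign(q)$ collapse the double sum to $\sum_{q\in MG(A,2r)}\sign(q)\,\bar\delta(q)\prod_{i\leq j}G(i,j)^{q_{ij}}$, which is the right-hand side of (2).

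For the converse $(2)\Rightarrow(1)$, applying Theorem \ref{thm:cumulants} directly to the moment $F(A) := (\det C_{A_r,A_r})^{-1}$ with the multiplicative weight $f=\sign\cdot\bar\delta$ shows that its cumulant function is the same sum restricted to connected multigraphs, on which necessarily $\sign(q)=(-1)^{|A|-1}$. Factoring out this constant and comparing with Lemma \ref{lem:complex-gaussian-cumulants} yields $k^r_{\textup{ferm}}(A)=(-1)^{|A|-1}k^r_{\textup{bos}}(A)$, exactly condition (1). The main obstacle I expect is the sign bookkeeping in the moment identification: the overall sign produced by reordering $\psi\bar\psi\leftrightarrow\bar\psi\psi$ across all sites of $A_r$ must be tracked together with the sign convention in \eqref{def:complex-fermionic-state} so as to produce precisely $(\det C_{A_r,A_r})^{-1}$; once this is in place, the rest is a direct transcription of the scheme used to prove Theorem \ref{thm:cumulants-p=2}.
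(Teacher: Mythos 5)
Your proposal follows essentially the same route as the paper's proof: moment--cumulant inversion, substitution of the assumed sign relation, expansion of $k^r_{\textup{bos}}$ via Lemma \ref{lem:complex-gaussian-cumulants}, and the connected/disconnected resummation underlying Theorem \ref{thm:cumulants}; your explicit verification that $\sign(q)\,\bar\delta(q)$ is multiplicative over connected components and your handling of both implications only spell out what the paper leaves implicit. One caveat on the moment identification: Theorem \ref{thm:complex-fermionic-expectation} gives the fermionic moment as $\det\bigl((C^{-1})_{A_r,A_r}\bigr)$, a principal minor of $C^{-1}$, which by Jacobi's identity is not in general equal to $(\det(C_{A_r,A_r}))^{-1}$ as you assert --- but this conflation is inherited from the displayed equation in the statement itself (whose closing sentence correctly speaks of principal minors of $C^{-1}$) and does not affect the structure of your argument.
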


\begin{proof}
Consider $B\subset [n]$, then by the reversing formula for the moments and the assumption, we have that
\begin{equation}
\begin{split}
\left\langle\prod_{i \in B} \varphi^{2r}_i \right\rangle_{C} &= \sum_{\pi \in \Pi(B)} \prod_{B\in \pi} k^r_{\textup{ferm}}(B) \\
& = \sum_{\pi \in \Pi(B)} \prod_{B\in \pi} (-1)^{|B|-1}k^r_{\textup{bos}}(B).
\end{split}
\end{equation}
By using Lemma \ref{lem:complex-gaussian-cumulants}, we replace the expression of $k^r_{\textup{bos}}$ and use the definition of $\textup{sign}(q)$ to get
\begin{equation}
\begin{split}
\left\langle\prod_{i \in B} \varphi^{2r}_i \right\rangle_{C} &= \sum_{\pi \in \Pi(B)} \prod_{B\in \pi} \sum_{\substack{q \in MG(B,2r)\\\textup{connected}}} \textup{sign}(q)\, \bar{\delta}(q) \prod_{i,j\in B, i\leq j} G(i,j)^{q_{ij}}.
\end{split}
\end{equation}
Using a similar argument as in the proof of Theorem \ref{thm:cumulants}, the proof is concluded.
\end{proof}

\begin{rem}
    The problem of finding a matrix given its principal minors is a long-standing problem in algebra, see \cite{holtz_hyperdeterminantal_2007, huang_symmetrization_2017, matoui_principal_2021} and references therein. In Lemma \ref{lem:bos-fer-p>2}, not all principal minors are required, but only those related to the indices $A_r=\cup_{i\in A} \{ (i-1)r, (i-1)r+1,\dots,ir\}$ with $A\subset [n]$. For instance, consider the case where $G=I_n$, the identity matrix on $n$ elements, the condition on the principal minors become
    \begin{equation}
        (\det(C_{A_r, A_r}))^{-1} = \sum_{\substack{q\in MG(A,2r)\\ q_{ij}=0, \; i\neq j}} \textup{sign}(q) \bar{\delta}(q) = (-1)^{|A|-1} \left(\frac{r!}{2^{2r} (2r)!}\right)^{|A|},
    \end{equation}
    which, to the authors' knowledge cannot be directly solved.
\end{rem}

\section*{Funding}
F.C. was supported by the NWO (Dutch Research Organization) grant OCENW.KLEIN.083 and
W.M.R. by the NWO (Dutch Research Organization) grants OCENW.KLEIN.083 and VI.Vidi.213.112.

\section*{Conflicts of interest}
The authors declare no conflicts of interest regarding this manuscript.

\section*{Data availability statement}
We do not analyse or generate any datasets, because our work proceeds within a theoretical and mathematical approach.

\section*{Acknowledgement}
The authors are grateful to A. Cipriani, R. Hazra and S. Janson for helpful discussions. F.C. would like to thank C. Bellingeri for pointing out the reference \cite{peccati_wiener_2011} in relation to Theorem \ref{thm:cumulants}.

\bibliographystyle{abbrv}
\bibliography{algebra, fkg, gff, xy, ktt, coulomb}

\end{document}